\documentclass[12pt]{amsart}

\usepackage[margin=1in,letterpaper,portrait]{geometry}
\usepackage{amsmath}
\usepackage{amssymb}
\usepackage{amsthm}
\usepackage{verbatim}
\usepackage{url}
\usepackage{enumerate}
\usepackage[pdftex,bookmarksnumbered=true,linkbordercolor={1 0 0}]{hyperref}

\newcommand{\mb}{\mathbb}

\newcommand{\gauss}[2]{\genfrac{[}{]}{0pt}{}{#1}{#2}_q}


\begin{document}
\theoremstyle{plain} 
\newtheorem{Thm}{Theorem}[section] 
\newtheorem{Cor}[Thm]{Corollary}
\newtheorem{Main}{Main Theorem}
\renewcommand{\theMain}{} 
\newcommand{\Sol}{\text{Sol}}
\newtheorem{Lem}[Thm]{Lemma}
\newtheorem{Prop}[Thm]{Proposition} 

\theoremstyle{definition} 
\newtheorem*{Def}{Definition}
\newtheorem{Exer}{Exercise} 
\newtheorem{Prob}{Problem}
\newtheorem*{Rem}{Remark}
\newtheorem{Conj}{Conjecture} 

\theoremstyle{remark}      

\title{The Splitting Subspace Conjecture}
\author{Eric Chen \and Dennis Tseng}
\address{Eric Chen, Princeton University, Princeton, NJ 08544}
\email{ecchen@princeton.edu}
\address{Dennis Tseng, Massachusetts Institute of Technology, Cambridge, MA 02139}
\email{DennisCTseng@gmail.com}
\date{August 3, 2012}

\allowdisplaybreaks

\begin{abstract}
We answer a question by Niederreiter concerning the enumeration of a class of subspaces of finite dimensional vector spaces over finite fields by proving a conjecture by Ghorpade and Ram.

\end{abstract}

\maketitle

\section{Introduction}
We positively resolve the Splitting Subspace Conjecture, stemming from a question posed by Niederreiter (1995) \cite[p.~11]{NDR} and stated by Ghorpade and Ram \cite{ESS}. The conjecture was inspired by earlier work from Zeng, Han and He \cite{ZHH} and Ghorpade, Hasan and Kumari \cite{GHK} . We first define the notion of a $\sigma$-splitting subspace.

\begin{Def}
In the vector space $\mb{F}_{q^{mn}}$ over the finite field $\mb{F}_{q}$, given a $\sigma\in\mb{F}_{q^{mn}}$ such that $\mb{F}_{q^{mn}}=\mb{F}_{q}(\sigma)$, a ($m$-dimensional) subspace $W$ of $\mb{F}_{q^{mn}}$ is a $\sigma$-\emph{splitting subspace} if
\begin{align*}
W\oplus\sigma W\oplus \cdots\oplus\sigma^{n-1}W=\mb{F}_{q^{mn}}.
\end{align*}
\end{Def}

For example, $\left\{1,\sigma^{m},\sigma^{2m},\ldots,\sigma^{(n-1)m}\right\}$ spans a $\sigma$-splitting subspace. If $n=1$, then ${F}_{q^{mn}}$ is the only $\sigma$-splitting subspace; if $m=1$, then each 
$1$-dimensional subspace of ${F}_{q^{mn}}$ is $\sigma$-splitting.

\begin{Conj}[Ghorpade-Ram]
The number of $\sigma$-splitting subspaces is
\begin{align*}
\frac{q^{mn}-1}{q^{m}-1}q^{m(m-1)(n-1)}.
\end{align*}
\end{Conj}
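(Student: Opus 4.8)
The plan is to reduce the problem to counting ordered bases of $\mb{F}_{q^{mn}}$ of a special shape and then to build such a basis one vector at a time. View $V=\mb{F}_{q^{mn}}$ as an $\mb{F}_q$-vector space and let $T\colon V\to V$ be multiplication by $\sigma$; since $\mb{F}_q(\sigma)=\mb{F}_{q^{mn}}$, the operator $T$ has irreducible minimal (hence characteristic) polynomial $f$ of degree $mn$. For any basis $(w_1,\dots,w_m)$ of an $m$-dimensional subspace $W$, commutativity of the field gives $\langle \sigma^iw_j:0\le i<n\rangle=w_jB$ where $B:=\langle 1,\sigma,\dots,\sigma^{n-1}\rangle$ is an $n$-dimensional $\mb{F}_q$-subspace; comparing dimensions, $W$ is $\sigma$-splitting if and only if $w_1B\oplus\cdots\oplus w_mB=\mb{F}_{q^{mn}}$, and then $w_1,\dots,w_m$ are automatically independent over $\mb{F}_q$. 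Multiplication by any $\beta\in\mb{F}_{q^{mn}}^{\times}$ carries $\sigma$-splitting subspaces to $\sigma$-splitting subspaces and $v\in W$ to $\beta v\in\beta W$, so the number of $\sigma$-splitting $W$ through a fixed nonzero vector does not depend on that vector; double-counting pairs $(v,W)$ with $0\ne v\in W$ yields
\begin{align*}
N(m,n)=\frac{q^{mn}-1}{q^m-1}\,A(m,n),\qquad A(m,n):=\#\{\sigma\text{-splitting }W:1\in W\}.
\end{align*}
So it suffices to prove $A(m,n)=q^{m(m-1)(n-1)}$. Counting instead ordered bases $(1,u_2,\dots,u_m)$ of such $W$, and using that $1$ extends to an ordered basis of an $m$-space in $|\mathrm{GL}_m(\mb{F}_q)|/(q^m-1)$ ways, this is equivalent to
\begin{align*}
\#\{(u_2,\dots,u_m):B\oplus u_2B\oplus\cdots\oplus u_mB=\mb{F}_{q^{mn}}\}=\frac{|\mathrm{GL}_m(\mb{F}_q)|}{q^m-1}\,q^{m(m-1)(n-1)}.
\end{align*}

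Next I would build $(u_2,u_3,\dots)$ greedily, keeping $V_k:=B\oplus u_2B\oplus\cdots\oplus u_kB$ direct of dimension $kn$. At the $(k{+}1)$-st step the admissible $u_{k+1}$ are exactly the $\gamma\in\mb{F}_{q^{mn}}^{\times}$ with $\gamma B\cap V_k=0$. The key claim is that the number of such $\gamma$ depends only on $k$ (not on the earlier choices), and in fact equals $q^{(2n-1)k}(q^{m-k}-1)$; granting this, multiplying over $k=1,\dots,m-1$ gives $q^{(2n-1)\binom{m}{2}}\prod_{j=1}^{m-1}(q^j-1)=\frac{|\mathrm{GL}_m(\mb{F}_q)|}{q^m-1}q^{m(m-1)(n-1)}$, exactly as required. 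When $n=1$ one has $B=\mb{F}_q$ and the step count is plainly $q^m-q^k$, a useful consistency check; the cases $m\le 2$ and $n\le 2$ are likewise easy to verify directly.

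Everything up to this point is bookkeeping; the real content is the claim that the quantity $b_k(V_k):=\#\{\gamma\in\mb{F}_{q^{mn}}^{\times}:\gamma B\cap V_k=0\}$ is the same for every $V_k$ arising as a direct sum of $k$ translates of $B$ (and hence independent of $\sigma$ as well). Writing $b_k(V_k)=(q^{mn}-1)-\#\{\gamma\ne 0:\dim(\gamma B\cap V_k)\ge 1\}$ and stratifying the ``bad'' $\gamma$ by $d=\dim(\gamma B\cap V_k)$, a pair count gives the single identity $\sum_{d\ge 1}q^{d-1}(q-1)\,\#\{\gamma:\dim(\gamma B\cap V_k)\ge d\}=(q^{kn}-1)(q^{n}-1)$; recovering $b_k$ from it forces one to control each stratum $\#\{\gamma:\dim(\gamma B\cap V_k)\ge d\}$ separately, and a priori such a Grassmannian-intersection count could see the precise position of $V_k$. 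Pinning it down independently of $V_k$ is where the multiplicative (``sum of translates'') structure must be exploited: for instance, passing to the monic degree-$n$ matrix polynomial $x^nI-\sum_{i<n}x^iA_i$ over $\mb{F}_q[x]$ coming from the block-companion form of $T$ in an adapted basis (its determinant is $f$, which, being irreducible, forces its Smith normal form over $\mb{F}_q[x]$ to be $\mathrm{diag}(1,\dots,1,f)$) and counting from that normal form, or setting up a recursion in $n$. I expect this step --- equivalently, computing the number of monic degree-$n$ matrix polynomials over $\mb{F}_q[x]$ with a prescribed irreducible determinant --- to be the entire difficulty.
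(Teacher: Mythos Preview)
Your reductions are all correct: the double-count via the transitive action of $\mb{F}_{q^{mn}}^{\times}$, the passage to ordered bases starting at $1$, and the numerology that the conjectured step-counts $b_k=q^{(2n-1)k}(q^{m-k}-1)$ multiply to the right thing. But, as you yourself flag, the entire content lies in the unproven claim that $b_k(V_k)=\#\{\gamma\neq 0:\gamma B\cap V_k=0\}$ is independent of the particular $V_k=B\oplus u_2B\oplus\cdots\oplus u_kB$ reached along the way. You have not proved this, and nothing in the proposal comes close: the stratification $\sum_{d\ge 1}q^{d-1}(q-1)\,\#\{\gamma:\dim(\gamma B\cap V_k)\ge d\}=(q^{kn}-1)(q^n-1)$ is one linear relation among several unknowns and cannot by itself pin down the individual strata. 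The block-companion/Smith-normal-form rewriting you sketch is essentially the known reformulation of the splitting-subspace count as the enumeration of $m\times m$ monic degree-$n$ matrix polynomials over $\mb{F}_q$ with prescribed irreducible determinant (this appears already in Ghorpade--Hasan--Kumari), so it is a restatement of the problem rather than progress on it. In short, the proposal is an honest reduction with a clearly marked gap, not a proof.

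For comparison, the paper takes a completely different route and never needs your uniformity claim. It embeds the splitting-subspace count into a much larger family of counts $|[(a_{1,1},a_{1,2}),\ldots,(a_{r,1},a_{r,2})]|$ indexed by chains of dimension pairs, sets up a recursion on this family by expanding the auxiliary quantities $|\langle[a_{1,1},a_{1,2}],\ldots,[a_{r,1},a_{r,2}]\rangle|$ in two ways, and then verifies that an explicit closed form satisfies the recursion using two instances of the $q$-Chu--Vandermonde identity. The splitting-subspace formula is the special case $a_{i,1}=(n-i)m$, $a_{i,2}=(n-i-1)m$. So the paper trades your single hard uniformity statement for a larger but softer inductive scheme whose solution can be guessed and checked.
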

\noindent
This follows as Corollary \ref{grssc} from our main result, Theorem \ref{alpha}. The next two sections are devoted to proving this theorem. 
We first construct a recursion that gives the cardinality of more general classes of subspaces, including the $\sigma$-splitting subspaces, and then solve this recurrence to obtain the result. Finally, we discuss some special cases of our more general result. 

\section{Recursion}
For the remainder of this article, unless otherwise noted, consider more generally the vector space $\mb{F}_{q^{N}}(=\mb{F}_{q}^{N})$ over the finite field $\mb{F}_{q}$, given a $\sigma\in\mb{F}_{q^{N}}$ such that $\mb{F}_{q^{N}}=\mb{F}_{q}(\sigma)$.

We begin by isolating the key property of the linear transformation $v\mapsto\sigma v$.

\begin{Prop}
\label{preserve}
The linear endomorphisms of $\mb{F}_{{q}^{N}}$ that preserve no subspaces other than $\{0\}$ and all of $\mb{F}_{{q}^{N}}$ are exactly those which act as multiplication by a primitive element $\sigma$ that generates the extension $\mb{F}_{q}(\sigma)=\mb{F}_{q^{N}}$.
\end{Prop}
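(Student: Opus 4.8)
The plan is to establish the two implications of the equivalence separately, disposing of the routine direction first.

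\emph{If $T$ is multiplication by a generator, it has no proper invariant subspace.} I would take $T\colon v\mapsto\sigma v$ with $\mb{F}_q(\sigma)=\mb{F}_{q^N}$ and an arbitrary $T$-invariant $\mb{F}_q$-subspace $W$. From $\sigma W\subseteq W$ one iterates to $\sigma^kW\subseteq W$ for all $k\ge0$, and hence $f(\sigma)W\subseteq W$ for every $f\in\mb{F}_q[x]$, since $W$ is closed under $\mb{F}_q$-linear combinations. As $\sigma$ is algebraic over $\mb{F}_q$, the set $\{f(\sigma):f\in\mb{F}_q[x]\}$ is all of $\mb{F}_q(\sigma)=\mb{F}_{q^N}$, so $W$ is stable under multiplication by every element of $\mb{F}_{q^N}$; being an $\mb{F}_{q^N}$-subspace of the one-dimensional $\mb{F}_{q^N}$-vector space $\mb{F}_{q^N}$, it must be $\{0\}$ or $\mb{F}_{q^N}$.

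\emph{Conversely, an endomorphism with no proper invariant subspace is such a multiplication.} I would pick any $v_0\ne0$ and look at the $T$-cyclic subspace spanned by $v_0,Tv_0,T^2v_0,\dots$; it is nonzero and $T$-invariant, hence equals $\mb{F}_{q^N}$, so $v_0,Tv_0,\dots,T^{N-1}v_0$ is an $\mb{F}_q$-basis. Writing $T^Nv_0$ in this basis yields a monic degree-$N$ polynomial annihilating $v_0$, hence (by cyclicity) annihilating $T$, while an annihilating relation for $v_0$ of degree $<N$ would contradict the independence of that basis; so the minimal polynomial $p$ of $T$ has degree exactly $N$. That $p$ is irreducible then follows by contradiction: if $p=gh$ with $1\le\deg g<N$, both $g(T)$ and $h(T)$ are nonzero (their degrees are below $\deg p$) while $g(T)h(T)=p(T)=0$, so $g(T)\,\mb{F}_{q^N}$ is a nonzero $T$-invariant subspace contained in the proper subspace $\ker h(T)$, which is not allowed.

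To conclude, the evaluation map $\mb{F}_q[x]\to\mb{F}_{q^N}$, $f\mapsto f(T)v_0$, is surjective with kernel $(p)$, giving an $\mb{F}_q$-linear isomorphism $\mb{F}_q[x]/(p)\cong\mb{F}_{q^N}$ that intertwines multiplication by $x$ with $T$. Since $p$ is irreducible of degree $N$, $\mb{F}_q[x]/(p)$ is a field with $q^N$ elements in which the class $\sigma:=\bar x$ generates the extension over $\mb{F}_q$; transporting its field structure along the isomorphism realizes $T$ as multiplication by the generator $\sigma$. (In module-theoretic terms: $T$ has no proper invariant subspace iff the $\mb{F}_q[x]$-module $\mb{F}_{q^N}$ with $x$ acting as $T$ is simple, iff it is isomorphic to $\mb{F}_q[x]/(p)$ with $p$ a maximal ideal --- necessarily irreducible of degree $N$; equivalently, $\mb{F}_q[T]\subseteq\mathrm{End}_{\mb{F}_q}(\mb{F}_{q^N})$ is then a field of order $q^N$ over which $\mb{F}_{q^N}$ is one-dimensional. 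Here ``$T$ acts as multiplication by $\sigma$'' is read relative to the $\mb{F}_q$-basis $v_0,\dots,T^{N-1}v_0$ just constructed.)

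\emph{Main obstacle.} The only substantive step is the observation in the converse that \emph{every} nonzero vector is cyclic for $T$: this is what forces the minimal and characteristic polynomials to coincide and makes the module $\mb{F}_{q^N}$ simple and cyclic, and once it is in place the irreducibility of $p$ and the identification with a degree-$N$ extension are standard. For the rest of the paper only the first implication is needed --- that $v\mapsto\sigma v$ preserves no subspace other than $\{0\}$ and $\mb{F}_{q^N}$.
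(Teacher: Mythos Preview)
Your proof is correct. The forward direction is essentially identical to the paper's. For the converse, however, the paper takes a different route: it views $\mb{F}_{q}^{N}$ as an $\mb{F}_{q}[x]$-module with $x$ acting as $T$, invokes the primary decomposition over the PID $\mb{F}_{q}[x]$ to write $M\cong\bigoplus_{i=1}^{k}\mb{F}_{q}[x]/(p_{i}(x)^{r_{i}})$, and then observes that the absence of proper invariant subspaces forces $k=1$ and $r_{1}=1$, so $M\cong\mb{F}_{q}[x]/(p_{1})$ with $p_{1}$ irreducible. Your argument instead stays elementary: you pick a nonzero vector, use the ``no proper invariant subspace'' hypothesis to show it is cyclic, read off that the minimal polynomial has full degree $N$, and then argue irreducibility by contradiction via $\operatorname{im} g(T)\subseteq\ker h(T)$. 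The paper's approach is shorter and more structural but imports the classification theorem; yours is self-contained and makes explicit the one nontrivial point (every nonzero vector is cyclic), which you rightly flag as the main obstacle. You even sketch the module-theoretic alternative in your parenthetical remark, which is exactly the paper's line of reasoning.
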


\begin{proof}
Operators defined as multiplication by a primitive element $\sigma$ generating the extension $\mb{F}_{q}(\sigma)=\mb{F}_{q^{N}}$ cannot preserve any subspaces except $\{0\}$ and $\mb{F}_{{q}^{N}}$, for if $W$ is such a subspace with nonzero $w\in W$, then $w\sum_{i=0}^{N-1}{a_{i}\sigma^{i}}\in W,\ a_{i}\in \mb{F}_{q}$, so $W=\mb{F}_{q^{N}}$.
Conversely, note that any linear operator $T$ together with
the vector space $\mb{F}_{q}^{N}$ can be viewed as a finitely
generated $\mb{F}_{q}[x]$ module $M$, where $x$ acts as $T$. Since $\mb{F}_{q}[x]$ is a principal ideal domain, we can use the
primary decomposition of $M$ to find
$M\cong\bigoplus_{i=1}^{k}{\mb{F}_{q}[x]/(p_{i}(x)^{r_{i}})}$, where
$p_{i}$ is a polynomial for each $i$ and $r_{i}$ is a positive
integer. 

If $T$ preserves no proper subspaces of $\mb{F}_{{q}^{N}}$, then $k=1$. Also, $r_{1}=1$ unless $p_{1}(T)M$ is a proper submodule of $M$. Therefore, we have $M$ is
equal to $\mb{F}_{q}[x]/(p_{1}(x))$, where $p_{1}$ is an irreducible
polynomial. This is exactly what it means for $x(=T)$ to act as the
primitive element of the field extension
$\mb{F}_{q}(\sigma)=\mb{F}_{q^{N}}=\mb{F}_{q}^{N}$ with minimal
polynomial $p_{1}(x)$.
\end{proof}

We next define notation to describe the sets to be counted by the general recursion. 

\begin{Def}
Suppose that $A_{1},A_{2},\ldots,A_{k}$ are sets of subspaces of $\mb{F}_{q^{N}}$.
Let $[A_{1},A_{2},\ldots,A_{k}]$ be the set of all $k$-tuples
$(W_{1},W_{2},\ldots,W_{k})$
such that
\begin{alignat*}{2}
W_{i}\in A_{i}&\quad\text{for}&\ 1\leq &\ i\leq k,
\\W_{i}\supseteq W_{i+1}+\sigma W_{i+1}&\quad\text{for}&\quad\ 1\leq &\ i\leq k-1.
\end{alignat*}
\end{Def}
If $A_{i}$ is the set of all subspaces of $\mb{F}_{q^{N}}$ with dimension $d_{i}$, then $A_{i}$ is denoted within the brackets as $d_{i}$. For example, $[3,A_{2}]$ denotes all tuples $(W_{1},W_{2})$ such that $\dim(W_{1})=3$, $W_{2}\in A_{2}$ and $W_{1}\supseteq W_{2}+\sigma W_{2}$.

\begin{Def}
For nonnegative integers $a$, $b$ with $N>a>b$ or $a=b=0$
\begin{align*}
(a,b):=\left\{W\subseteq \mb{F}_{q^{N}}: \dim(W)=a\ \text{and}\ \dim(W\cap \sigma^{-1} W)=b\right\}.
\end{align*}
\end{Def}
For example, $(1,0)$ is the set of all 1-dimensional subspaces and $(2,1)$ is the set of all 2-dimensional subspaces $W$ such that $\dim(W\cap \sigma^{-1} W)=1$.

\begin{Def}
Given sets $[A_{1,1},A_{1,2}],[A_{2,1},A_{2,2}],\ldots,[A_{r,1},A_{r,2}]$ as defined above, let
\begin{align*}
\left\langle[A_{1,1},A_{1,2}],[A_{2,1},A_{2,2}],\ldots,[A_{r,1},A_{r,2}]\right\rangle
\end{align*}
denote the set of $2r$-tuples of subspaces $(W_{1,1},W_{1,2},W_{2,1},W_{2,2},\ldots W_{r,1},W_{r,2})$ such that
\begin{alignat*}{2}
(W_{i,1},W_{i,2})\in[A_{i,1},A_{i,2}]&\quad\text{for}&\quad 1\leq &\ i\leq r,
\\W_{i,2}\supseteq W_{i+1,1}&\quad\text{for}&\quad 1\leq &\ i\leq r-1.
\end{alignat*}
\end{Def}

For example, $\left\langle [3,2],[2,1]\right\rangle$ is the set of all $4$-tuples of subspaces $(W_{1},W_{2},W_{3},W_{4})$ such that
\begin{alignat*}{2}
\dim(W_{1})=3,\ \dim(W_{2})=2,&& &\quad \dim(W_{3})=2,\ \dim(W_{4})=1,
\\W_{1}\supseteq W_{2}+\sigma W_{2},&& &\quad W_{3}\supseteq W_{4}+\sigma W_{4},
\\&& &\quad W_{2}\supseteq W_{3}.
\end{alignat*}

We use the following proposition extensively in constructing the recursion.
\begin{Prop}
\label{expand}
For nonnegative integers $N>a>b$ or $a=b=0$
\begin{align*}
[a,b]&=\bigcup_{i=b}^{\max(a-1,0)}{[(a,i),b]}
\\&=\bigcup_{j=0}^{\max(b-1,0)}{[a,(b,j)]}.
\end{align*}
\end{Prop}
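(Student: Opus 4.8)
The plan is to prove each of the two displayed equalities by double inclusion, with essentially all of the content in a single direction. For the easy inclusions, observe that by definition $(a,i)$ is a set of $a$-dimensional subspaces and $(b,j)$ is a set of $b$-dimensional subspaces, so any tuple counted by $[(a,i),b]$ or by $[a,(b,j)]$ automatically satisfies every constraint in the definition of $[a,b]$; hence $\bigcup_{i}[(a,i),b]\subseteq[a,b]$ and $\bigcup_{j}[a,(b,j)]\subseteq[a,b]$ regardless of the ranges of $i$ and $j$.

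For the reverse inclusions, I would fix an arbitrary $(W_{1},W_{2})\in[a,b]$, so that $\dim W_{1}=a$, $\dim W_{2}=b$, and $W_{1}\supseteq W_{2}+\sigma W_{2}$, and set $i=\dim(W_{1}\cap\sigma^{-1}W_{1})$ and $j=\dim(W_{2}\cap\sigma^{-1}W_{2})$. Then $W_{1}\in(a,i)$ and $W_{2}\in(b,j)$ tautologically, so $(W_{1},W_{2})$ lies in $[(a,i),b]$ and in $[a,(b,j)]$; it remains only to check that $i$ and $j$ land in the indicated index ranges, i.e. $b\le i\le\max(a-1,0)$ and $0\le j\le\max(b-1,0)$. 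This in particular also confirms that $(a,i)$ and $(b,j)$ are legitimately defined for these indices (one has $a>i$ or $a=i=0$, and similarly for $b$, $j$, while $a,b<N$).

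The inequality $j\ge 0$ is vacuous, and $i\ge b$ follows because $W_{2}\subseteq W_{1}$ together with $\sigma W_{2}\subseteq W_{1}$ gives $W_{2}\subseteq\sigma^{-1}W_{1}$, hence $W_{2}\subseteq W_{1}\cap\sigma^{-1}W_{1}$ and $i\ge\dim W_{2}=b$. For the upper bounds --- the only step that uses anything beyond the definitions --- I would note that if $a=b=0$ the statement is immediate, and otherwise argue by contradiction: if $i=a$ then $W_{1}\cap\sigma^{-1}W_{1}=W_{1}$, i.e. $\sigma W_{1}\subseteq W_{1}$, so multiplication by $\sigma$ preserves $W_{1}$; since $0<a<N$, $W_{1}$ is a proper nonzero subspace, contradicting Proposition \ref{preserve}, whence $i\le a-1=\max(a-1,0)$. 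The identical argument applied to $W_{2}$ (using $0<b<N$, which holds because $b<a<N$) shows that if $b\ge 1$ then $j\le b-1$, while if $b=0$ then $W_{2}=\{0\}$ and $j=0=\max(b-1,0)$.

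I expect the main (indeed the only) obstacle to be recognizing that the upper bounds on $i$ and $j$ amount precisely to excluding proper nonzero $\sigma$-invariant subspaces, and then invoking Proposition \ref{preserve}; the remaining work is just unwinding the bracket and parenthesis notation and disposing of the degenerate cases $a=0$ and $b=0$.
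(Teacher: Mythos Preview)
Your proof is correct and is precisely the argument the paper intends: the paper's own proof is the single line ``Follows from Proposition~\ref{preserve} and the definitions of $[\ ,\ ],(\ ,\ )$,'' and you have simply unpacked that sentence, with the invocation of Proposition~\ref{preserve} appearing exactly where you use it (the upper bounds $i\le a-1$ and $j\le b-1$).
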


\begin{proof}
Follows from Proposition \ref{preserve} and the definitions of $[\ ,\ ],(\ ,\ )$.
\end{proof}

We next define an ordering on the tuples labelling the sets of subspaces 
\begin{align*}
[(a_{1,1},a_{1,2}),(a_{2,1},a_{2,2}),\ldots,(a_{r,1},a_{r,2})].
\end{align*}
The recursion in Lemma \ref{recursion} will give the cardinality of sets of subspaces so labelled in terms of the cardinality of sets labelled by tuples before it in the ordering. The base case is $[(0,0)]$, containing one element.

\begin{Def}
First, define an ordering on the ordered pairs of the form $(a,b)$ such that $(a_{1},b_{1})\succ(a_{2},b_{2})$ if $a_{1}>a_{2}$ or $a_{1}=a_{2}$ and $b_{1}<b_{2}$.
Next, define an ordering on tuples of the form $[(a_{1,1},a_{1,2}),(a_{2,1},a_{2,2}),\ldots,(a_{r,1},a_{r,2})]$ such that the order is lexicographic in terms of the ordered pairs $(a_{i,1},a_{i,2})$ from left to right.
Finally, define an ordering on the same tuples for $s\geq 0$ such that
\begin{align*} [(a_{1,1},a_{1,2}),(a_{2,1},a_{2,2}),\ldots,(a_{r+s,1},a_{r+s,2})]\succ[(a_{1,1},a_{1,2}),(a_{2,1},a_{2,2}),\ldots,(a_{r,1},a_{r,2})].
\end{align*}

For example, $(3,1)\succ(3,2)\succ(2,0)$ and $[(6,5),(4,2)]\succ[(6,5),(4,3)]\succ[(5,2),(2,0)]$.
\end{Def}

\begin{Lem}
\label{recursion}
Suppose
\begin{align*}
N>a_{1,1}>a_{1,2}\geq a_{2,1}>a_{2,2}\geq\ldots\geq a_{r,1}>a_{r,2}\geq 0=a_{r+1,1}=a_{r+1,2}=\ldots=a_{r+s,1}=a_{r+s,2}
\end{align*}
and after setting
\begin{align*}
a_{0,1}=a_{0,2}=N,\quad a_{r+1,1}=a_{r+1,2}=0,\quad j_{r+1}=k_{r+1}=0,
\end{align*}
that (or else $[(a_{1,1},a_{1,2}),(a_{2,1},a_{2,2}),\ldots,(a_{r,1},a_{r,2})]$ is empty)
\begin{align*}
a_{i-1,1}\geq 2a_{i,1}-a_{i,2}\quad\text{for}\quad 1\leq i\leq r.
\end{align*}

Let
\begin{align*}
&C=\left\{(j_{1},\ldots,j_{r}):\max(a_{i+1,2},2a_{i,2}-a_{i,1})\leq j_{i}\leq \max(a_{i,2}-1,0), 1\leq i\leq r\right\},
\\&D=\left\{(k_{1},\ldots,k_{r}):a_{i,2}\leq k_{i}\leq a_{i,1}-1, 1\leq i\leq r\right\}.
\end{align*}
 
Then
\begin{align*}
&|[(a_{1,1},a_{1,2}),(a_{2,1},a_{2,2}),\ldots,(a_{r+s,1},a_{r+s,2})]|
\\&=|[(a_{1,1},a_{1,2}),(a_{2,1},a_{2,2}),\ldots,(a_{r,1},a_{r,2})]|
\\&=\sum_{(j_{1},\ldots,j_{r})\in C}{|[(a_{1,2},j_{1}),(a_{2,2},j_{2}),\ldots,(a_{r,2},j_{r})]|\prod_{i=1}^{r}{\gauss{a_{i-1,2}-(2a_{i,2}-j_{i})}{a_{i,1}-(2a_{i,2}-j_{i})}}}
\\ &-\sum_{(k_{1},\ldots,k_{r})\in D\backslash (a_{1,2},\ldots,a_{r,2})} {|[(a_{1,1},k_{1}),(a_{2,1},k_{2}),\ldots,(a_{r,1},k_{r})]|\prod_{i=1}^{r}{\gauss{k_{i}-a_{i+1,1}}{a_{i,2}-a_{i+1,1}}}}.
\end{align*}
\end{Lem}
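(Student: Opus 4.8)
The plan is to prove the two displayed equalities separately. The first equality, that padding a tuple with pairs $(0,0)$ on the right does not change the cardinality, follows immediately from the definitions: by Proposition \ref{preserve} the only subspace containing $W + \sigma W$ when $W = \{0\}$ is forced once we know the preceding $W_{r,2} = \{0\}$, so $W_{r+1,1} = \cdots = W_{r+s,2} = \{0\}$ is the unique extension, giving a bijection between the two indexed sets. This is a one-line observation and I will dispatch it first. The substance is the second equality, which I will prove by an inclusion–exclusion argument on the tuple $[(a_{1,1},a_{1,2}),\ldots,(a_{r,1},a_{r,2})]$.

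The main idea is to expand each constraint $(W_{i,1},W_{i,2})$ using Proposition \ref{expand}. Fix a tuple $(W_{1,1},W_{1,2},\ldots,W_{r,1},W_{r,2})$ in our set. For each $i$, both $W_{i,1} \supseteq W_{i,2} + \sigma W_{i,2}$ and $W_{i,2} \cap \sigma^{-1}W_{i,2}$ have prescribed dimensions; writing $U_i := W_{i,2} + \sigma W_{i,2}$, one has $\dim U_i = 2a_{i,2} - (\text{something})$, and the key numerics $\max(a_{i+1,2}, 2a_{i,2}-a_{i,1}) \le j_i \le \max(a_{i,2}-1,0)$ in the set $C$ are exactly the range of possible values of $\dim(W_{i,2} \cap \sigma^{-1}W_{i,2})$ compatible with the surrounding inclusions $W_{i-1,2} \supseteq W_{i,1} \supseteq U_i$ and $W_{i,2} \supseteq W_{i+1,1} \supseteq U_{i+1}$. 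So the first sum over $C$ counts chains where we have additionally recorded the data of $W_{i,2} \cap \sigma^{-1}W_{i,2}$ (which becomes the new "$W_{i,1}$" at the next level, hence the appearance of the smaller tuple $[(a_{1,2},j_1),\ldots,(a_{r,2},j_r)]$), and then multiplied by the number of ways to choose each $W_{i,1}$ as an $a_{i,1}$-dimensional space sandwiched between $W_{i-1,2}$ and $U_i$ — a Gaussian binomial count. The point is that summing over $C$ with the Gaussian-binomial weights $\prod_i \gauss{a_{i-1,2}-(2a_{i,2}-j_i)}{a_{i,1}-(2a_{i,2}-j_i)}$ overcounts, because it does not impose the relation $W_{i,2} \supseteq W_{i+1,1}$ faithfully: it only records that $W_{i,2}$ and the chosen $W_{i+1,1}$ both sit in the ambient space with the right dimensions, not that one contains the other.

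The correction term is the second sum. The set $D$ with $a_{i,2} \le k_i \le a_{i,1}-1$ indexes the possible dimensions of $W_{i,1} \cap \sigma^{-1}W_{i,1}$; the "diagonal" choice $k_i = a_{i,2}$ for all $i$ is exactly the honest case and is excluded (hence $D \setminus (a_{1,2},\ldots,a_{r,2})$), while the strictly larger choices count the configurations that were erroneously included in the first sum. The Gaussian binomial $\gauss{k_i - a_{i+1,1}}{a_{i,2}-a_{i+1,1}}$ counts the number of $a_{i,2}$-dimensional $W_{i,2}$ with $W_{i+1,1} \subseteq W_{i,2} \subseteq (W_{i,1} \cap \sigma^{-1}W_{i,1})$, using that when $\dim(W_{i,1}\cap\sigma^{-1}W_{i,1}) = k_i$ the space $W_{i,2}+\sigma W_{i,2}$ has dimension $2a_{i,2}-j_i$ and the containment $W_{i,2}\supseteq W_{i+1,1}$ together with $W_{i,1}\supseteq W_{i,2}+\sigma W_{i,2}$ constrains $W_{i,2}$ to lie inside $W_{i,1}\cap\sigma^{-1}W_{i,1}$. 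I will carry out the bijection/counting precisely: partition the set being counted according to the dimensions $\dim(W_{i,1}\cap \sigma^{-1}W_{i,1})$ for each $i$, show that the honest count equals "first sum minus (contribution of non-diagonal strata)", and identify each stratum's size with a term of the second sum via Proposition \ref{expand} and the standard fact that the number of $c$-dimensional subspaces between a fixed $b$-dimensional and a fixed $a$-dimensional space ($b \le c \le a$) is $\gauss{a-b}{c-b}$.

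The hard part will be bookkeeping the simultaneous constraints correctly — in particular verifying that the index ranges defining $C$ and $D$ are exactly (not merely approximately) the feasible ranges, and that the chain conditions linking level $i$ to level $i+1$ translate correctly under the substitution $W_{i,2}\cap\sigma^{-1}W_{i,2} \rightsquigarrow$ "new $W_{i,1}$". I expect to need the boundary conventions $a_{0,1}=a_{0,2}=N$, $a_{r+1,1}=a_{r+1,2}=0$, $j_{r+1}=k_{r+1}=0$ precisely to make the $i=1$ and $i=r$ terms uniform, and I will check that the hypothesis $a_{i-1,1} \ge 2a_{i,1}-a_{i,2}$ is what guarantees the relevant Gaussian binomials are nonzero (equivalently, that the sandwiching subspaces actually exist). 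Once the stratification and the two counts are matched term by term, the lemma follows by rearranging; no deep input beyond Propositions \ref{preserve} and \ref{expand} and elementary Gaussian-binomial subspace counting is needed.
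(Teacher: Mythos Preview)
Your first-equality argument (padding by $(0,0)$) is fine and matches the paper. The second equality uses the same ingredients as the paper, but your framing as ``overcount plus correction'' contains a concrete error that would derail the execution. You assert that the $C$-sum ``does not impose the relation $W_{i,2}\supseteq W_{i+1,1}$ faithfully''. This is false: in the $C$-sum the tuple $(W_{1,2},\ldots,W_{r,2})$ lies in $[(a_{1,2},j_1),\ldots,(a_{r,2},j_r)]$, so $W_{i-1,2}\supseteq W_{i,2}+\sigma W_{i,2}$ is already there, and each $W_{i,1}$ is then chosen with $W_{i,2}+\sigma W_{i,2}\subseteq W_{i,1}\subseteq W_{i-1,2}$ (this is precisely what $\gauss{a_{i-1,2}-(2a_{i,2}-j_i)}{a_{i,1}-(2a_{i,2}-j_i)}$ counts), so the containment $W_{i-1,2}\supseteq W_{i,1}$ \emph{is} imposed. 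Likewise your plan to ``partition the set being counted according to $\dim(W_{i,1}\cap\sigma^{-1}W_{i,1})$'' cannot mean the set $[(a_{1,1},a_{1,2}),\ldots,(a_{r,1},a_{r,2})]$ itself, since there that dimension is fixed at $a_{i,2}$ and there is nothing to partition.

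The missing idea, which the paper makes explicit, is to introduce the auxiliary object $\langle[a_{1,1},a_{1,2}],\ldots,[a_{r,1},a_{r,2}]\rangle$: the set of $2r$-tuples with $\dim W_{i,1}=a_{i,1}$, $\dim W_{i,2}=a_{i,2}$, $W_{i,1}\supseteq W_{i,2}+\sigma W_{i,2}$, $W_{i,2}\supseteq W_{i+1,1}$, and \emph{no} constraint on $\dim(W_{i,1}\cap\sigma^{-1}W_{i,1})$. Both the $C$-sum and the $D$-sum count this same set: one stratifies it by $j_i=\dim(W_{i,2}\cap\sigma^{-1}W_{i,2})$ and sums out the choice of $W_{i,1}$, the other stratifies by $k_i=\dim(W_{i,1}\cap\sigma^{-1}W_{i,1})$ and sums out the choice of $W_{i,2}$. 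In the $D$-expansion the diagonal term $(k_1,\ldots,k_r)=(a_{1,2},\ldots,a_{r,2})$ has every Gaussian binomial equal to $1$ and is exactly $|[(a_{1,1},a_{1,2}),\ldots,(a_{r,1},a_{r,2})]|$; equating the two expressions for $|\langle\cdots\rangle|$ and isolating that term gives the lemma. It is a straight double count of one auxiliary set, not an inclusion--exclusion, and once you name the angle-bracket set the bookkeeping you worry about in your last paragraph becomes mechanical.
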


\begin{proof}
We give an example before the general case. Let $r=2$; we compute $|[(3,1),(1,0)]|$ by counting $|\left\langle [3,1],[1,0]\right\rangle|$ in two different ways. Applying Proposition \ref{expand} to the terms on the left within the brackets gives
\begin{align*}
|\left\langle [3,1],[1,0]\right\rangle|=|\left\langle [(3,2),1],[(1,0),0]\right\rangle|+|\left\langle [(3,1),1],[(1,0),0]\right\rangle|.
\end{align*}

Above, if $(W_{1},W_{2},W_{3},W_{4})\in \left\langle [(3,2),1],[(1,0),0]\right\rangle$ then $W_{2}=W_{3}$ and $W_{4}=\left\{0\right\}$. So 
\begin{align*}
|\left\langle [(3,2),1],[(1,0),0]\right\rangle|=|[(3,2),(1,0)]|.
\end{align*} 

Likewise for $(W_{1},W_{2},W_{3},W_{4})\in \left\langle [(3,1),1],[(1,0),0]\right\rangle$ then $W_{2}=W_{3}$ and $W_{4}=\left\{0\right\}$. So 
\begin{align*}
|\left\langle [(3,1),1],[(1,0),0]\right\rangle|=|[(3,1),(1,0)]|,
\end{align*}

and
\begin{align*} 
|\left\langle [3,1],[1,0]\right\rangle|=|[(3,2),(1,0)]|+|[(3,1),(1,0)]|.
\end{align*}

Next, applying Proposition \ref{expand} to the terms on the right within the brackets gives
\begin{align*}
|\left\langle [3,1],[1,0]\right\rangle|=|\left\langle [3,(1,0)],[1,(0,0)]\right\rangle|.
\end{align*}

If $(W_{1},W_{2},W_{3},W_{4})\in \left\langle [3,(1,0)],[1,(0,0)]\right\rangle$, then $W_{3}=W_{2}$, $W_{4}=\left\{0\right\}$ and thus $W_{1}$ is a $3$-dimensional subspace containing the $2$-dimensional space $W_{2}+\sigma W_{2}$. So
\begin{align*}
|\left\langle [3,(1,0)],[1,(0,0)]\right\rangle|=|[(1,0),(0,0)]|\gauss{N-2}{1},
\end{align*}

and therefore
\begin{align*}
|\left\langle [3,(1,0)],[1,(0,0)]\right\rangle|=|[(1,0),(0,0)]|\gauss{N-2}{1}.
\end{align*}

We then have, after rearranging, that
\begin{align*}
|[(3,1),(1,0)]|=|[(1,0),(0,0)]|\gauss{N-2}{1}-|[(3,2),(1,0)]|.
\end{align*} 

Note that 
\begin{align*}
[(3,2),(1,0)],\ [(1,0),(0,0)]
\end{align*}
come before $[(3,1),(1,0)]$ in the ordering on tuples. 

The proof of the Lemma is a generalization of this process. The first equality is clear. The size of
$[(a_{1,1},a_{1,2}),(a_{2,1},a_{2,2}),\ldots,(a_{r,1},a_{r,2})]$ is computed by applying Proposition \ref{expand}
\begin{align*}
&|\left\langle[a_{1,1},a_{1,2}],[a_{2,1},a_{2,2}],\ldots,[a_{r,1},a_{r,2}]\right\rangle|
\\ &=\sum_{(k_{1},\ldots,k_{r})\in D} {|\left\langle[(a_{1,1},k_{1}),a_{1,2}],[(a_{2,1},k_{2}),a_{2,2}],\ldots,[(a_{r,1},k_{r}),a_{r,2}]\right\rangle|}
\\&=\sum_{(k_{1},\ldots,k_{r})\in D} {|[(a_{1,1},k_{1}),(a_{2,1},k_{2}),\ldots,(a_{r,1},k_{r})]|\prod_{i=1}^{r}{\gauss{k_{i}-a_{i+1,1}}{a_{i,2}-a_{i+1,1}}}}\label{right}\tag{R}.
\intertext{Expanding in the other way, we get}
&|\left\langle[a_{1,1},a_{1,2}],[a_{2,1},a_{2,2}],\ldots,[a_{r,1},a_{r,2}]\right\rangle|
\\&=\sum_{(j_{1},\ldots,j_{r})\in C}{|\left\langle[a_{1,1},(a_{1,2},j_{1})],[a_{2,1},(a_{2,2},j_{2})],\ldots,[a_{r,1},(a_{r,2},j_{r})]\right\rangle|}
\\&=\sum_{(j_{1},\ldots,j_{r})\in C}{|[(a_{1,2},j_{1}),(a_{2,2},j_{2}),\ldots,(a_{r,2},j_{r})]|\prod_{i=1}^{r}{\gauss{a_{i-1,2}-(2a_{i,2}-j_{i})}{a_{i,1}-(2a_{i,2}-j_{i})}}}\label{left}\tag{L}.
\end{align*}
Subtracting from \eqref{right} and \eqref{left} the quantity
\begin{align*}
|[(a_{1,1},a_{1,2}),(a_{2,1},a_{2,2}),\ldots,(a_{r,1},a_{r,2})]|
\end{align*}
produces the stated result of the Lemma.
\end{proof}

Finally, we relate sets of the form $[(a_{1,1},a_{1,2}),(a_{2,1},a_{2,2}),\ldots,(a_{r,1},a_{r,2})]$ to $\sigma$-splitting subspaces.

\begin{Prop}
\label{splittingsubspaces}
Let $\mb{F}_{q^{N}}=\mb{F}_{q^{mn}}$. Then
\begin{align*}
&[((n-1)m,(n-2)m),((n-2)m,(n-3)m),\ldots,(2m,m),(m,0)]
\\&=\left\{(\bigoplus_{i=0}^{n-2}{\sigma^{i}W},\ \bigoplus_{i=0}^{n-3}{\sigma^{i}W},\ \ldots,\ W\oplus\sigma W,\ W)\ :\  \bigoplus_{i=0}^{n-1}{\sigma^{i}W}=\mb{F}_{q^{mn}}\right\}.
\end{align*}

In particular $|[((n-1)m,(n-2)m),((n-2)m,(n-3)m),\ldots,(2m,m),(m,0)]|$ is the number of $\sigma$-splitting subspaces.
\end{Prop}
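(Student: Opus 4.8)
\emph{Strategy.} The plan is to prove the asserted set equality by establishing both inclusions, with the bulk of the effort in the inclusion ``$\subseteq$''; once the equality is known together with the explicit parametrization, the ``in particular'' clause is immediate since $W\mapsto(W_1,\dots,W_{n-1})$ will be seen to be a bijection onto the bracket set. I will assume $n\ge 2$, the case $n=1$ being vacuous (both sides are the one-element set of empty tuples). Throughout I will use that, since $v\mapsto\sigma^{-1}v$ is an $\mb{F}_q$-linear isomorphism, it carries direct-sum decompositions to direct-sum decompositions; in particular if $W$ is $\sigma$-splitting then $\sigma^{-1}W\oplus W\oplus\sigma W\oplus\cdots\oplus\sigma^{n-2}W=\mb{F}_{q^{mn}}$ as well.

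\emph{The inclusion ``$\supseteq$''.} Given a $\sigma$-splitting subspace $W$, I would set $W_j:=\bigoplus_{i=0}^{\,n-1-j}\sigma^i W$ for $1\le j\le n-1$, so that $W_{n-1}=W$ and $W_1=\bigoplus_{i=0}^{\,n-2}\sigma^i W$. Each $W_j$ is a subsum of $W\oplus\sigma W\oplus\cdots\oplus\sigma^{n-1}W$, so $\dim W_j=(n-j)m$. Both $W_j$ and $\sigma^{-1}W_j=\bigoplus_{i=-1}^{\,n-2-j}\sigma^i W$ are subsums of $\sigma^{-1}W\oplus W\oplus\cdots\oplus\sigma^{n-2}W$, so comparing coordinates in that decomposition gives $W_j\cap\sigma^{-1}W_j=\bigoplus_{i=0}^{\,n-2-j}\sigma^i W$, hence $\dim(W_j\cap\sigma^{-1}W_j)=(n-1-j)m$. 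Finally $W_{j+1}+\sigma W_{j+1}=\bigoplus_{i=0}^{\,n-1-j}\sigma^i W=W_j$, so the containment $W_j\supseteq W_{j+1}+\sigma W_{j+1}$ holds. Thus $(W_1,\dots,W_{n-1})$ lies in the bracket set with precisely the index data $((n-1)m,(n-2)m),\dots,(m,0)$.

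\emph{The inclusion ``$\subseteq$''.} Let $(W_1,\dots,W_{n-1})$ lie in the bracket set, so $\dim W_j=(n-j)m$, $\dim(W_j\cap\sigma^{-1}W_j)=(n-1-j)m$, and $W_j\supseteq W_{j+1}+\sigma W_{j+1}$ for $1\le j\le n-2$. First I would promote each containment to an equality. From $W_j\supseteq W_{j+1}+\sigma W_{j+1}$ we get $W_{j+1}\subseteq W_j$ and $W_{j+1}\subseteq\sigma^{-1}W_j$, so $W_{j+1}\subseteq W_j\cap\sigma^{-1}W_j$; as both sides have dimension $(n-1-j)m$, in fact $W_{j+1}=W_j\cap\sigma^{-1}W_j$. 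Consequently $W_{j+1}\cap\sigma W_{j+1}=\sigma(W_{j+1}\cap\sigma^{-1}W_{j+1})$ has the same dimension as $W_{j+1}\cap\sigma^{-1}W_{j+1}$, which is $(n-2-j)m$: for $j\le n-3$ this is the previous identity applied at index $j+1$, and for $j=n-2$ it is the hypothesis $\dim(W_{n-1}\cap\sigma^{-1}W_{n-1})=0$. Hence $\dim(W_{j+1}+\sigma W_{j+1})=2(n-1-j)m-(n-2-j)m=(n-j)m=\dim W_j$, so $W_j=W_{j+1}+\sigma W_{j+1}$. Writing $W:=W_{n-1}$ and iterating this identity down to $W_{n-1}$ gives $W_j=\sum_{i=0}^{\,n-1-j}\sigma^i W$; since the right-hand side is a sum of $n-j$ subspaces of dimension $m$ with total dimension $\dim W_j=(n-j)m$, the sum is direct, i.e.\ $W_j=\bigoplus_{i=0}^{\,n-1-j}\sigma^i W$. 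It remains to check that $W$ is $\sigma$-splitting: using $W_1=\bigoplus_{i=0}^{\,n-2}\sigma^i W$ one has $W_1+\sigma^{-1}W_1=\sigma^{-1}\bigl(\sum_{i=0}^{\,n-1}\sigma^i W\bigr)$, while $\dim(W_1+\sigma^{-1}W_1)=2(n-1)m-\dim(W_1\cap\sigma^{-1}W_1)=2(n-1)m-(n-2)m=nm$, whence $\sum_{i=0}^{\,n-1}\sigma^i W=\mb{F}_{q^{mn}}$ and, comparing dimensions, the sum is direct. So $(W_1,\dots,W_{n-1})=\bigl(\bigoplus_{i=0}^{\,n-2}\sigma^i W,\dots,W\bigr)$ with $W$ a $\sigma$-splitting subspace, as required. (Alternatively, once $W_1=\bigoplus_{i=0}^{\,n-2}\sigma^i W$ is known, the subspace $\sum_{i\ge 0}\sigma^i W$ is nonzero and invariant under multiplication by $\sigma$, hence equals $\mb{F}_{q^{mn}}$ by Proposition \ref{preserve}, and a dimension count again yields $\bigoplus_{i=0}^{\,n-1}\sigma^i W=\mb{F}_{q^{mn}}$.)

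\emph{Main obstacle.} I expect the crux to be the inclusion ``$\subseteq$'', and within it the step of turning the nesting conditions $W_j\supseteq W_{j+1}+\sigma W_{j+1}$ into equalities: this is the one place where all of the dimension data carried by the pairs $(a_{j,1},a_{j,2})$ must be used together, and the bookkeeping of indices (in particular the boundary value $j=n-2$) is where care is needed. The two inclusions together show $W\mapsto(W_1,\dots,W_{n-1})$ is a bijection from $\sigma$-splitting subspaces onto the bracket set, giving the final cardinality assertion.
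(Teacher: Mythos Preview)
Your proof is correct and follows essentially the same strategy as the paper's: both directions are handled by dimension counts that upgrade the containments $W_j\supseteq W_{j+1}+\sigma W_{j+1}$ to equalities, followed by an induction expressing each $W_j$ as $\bigoplus_i\sigma^i W$ and a final dimension check showing $W$ is $\sigma$-splitting. Your write-up is in fact more thorough than the paper's---you verify the ``$\supseteq$'' direction explicitly (the paper merely asserts it), and you pass through the auxiliary identity $W_{j+1}=W_j\cap\sigma^{-1}W_j$, which the paper leaves implicit---but the underlying argument is the same.
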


\begin{proof}
If W is a $\sigma$-splitting subspace, then
\begin{align*}
(\bigoplus_{i=0}^{n-2}&{\sigma^{i}W},\bigoplus_{i=0}^{n-3}{\sigma^{i}W},\ldots,W\oplus\sigma W,W)
\\&\in[((n-1)m,(n-2)m),((n-2)m,(n-3)m),\ldots,(2m,m),(m,0)].
\end{align*}

On the other hand, suppose that
\begin{align*}
(W_{n-1},\ldots,W_{1})\in [((n-1)m,(n-2)m),((n-2)m,(n-3)m),\ldots,(2m,m),(m,0)] 
\end{align*}
Then for $1\leq k\leq n-2$
\begin{align*}
\dim(W_{k+1})&=(k+1)m
\\&=2km-(k-1)m
\\&=\dim(W_{k}+\sigma W_{k}).
\intertext{So $W_{k+1}=W_{k}+\sigma W_{k}$ for $1\leq k\leq n-2$. Also, $W_{2}=W_{1}\oplus\sigma W_{1}$ as $W_{1}\cap\sigma W_{1}=\left\{0\right\}$.} 
\intertext{Suppose that $W_{k}=\bigoplus_{i=0}^{k-1}{\sigma^{i}W_{1}}$. Then, since $\dim(W_{k+1})=\dim(W_{k}+\sigma W_{k})=(k+1)m$, we obtain}
W_{k+1}&=W_{k}+\sigma W_{k}
\\&=W_{1}+\sigma W_{1}+\cdots+\sigma^{k}W_{1}
\\&=\bigoplus_{i=0}^{k}{\sigma^{i}W_{1}}.
\end{align*}

When $k=n-1$, we have that $W_{n-1}+\sigma W_{n-1}=\bigoplus_{i=0}^{n-1}{\sigma^{i}W_{1}}$, since $W_{n-1}+\sigma W_{n-1}=\mb{F}_{q^{mn}}$ is $mn$-dimensional. So $W_{1}$ is indeed a $\sigma$-splitting subspace.
\end{proof}

\begin{Cor}
The number of $\sigma$-splitting subspaces in $\mb{F}_{q^{mn}}$ over $F_{q}$ is independent of choice of primitive element $\sigma$.
\end{Cor}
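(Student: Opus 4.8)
The plan is to observe that the recursion of Lemma \ref{recursion}, together with the base case $|[(0,0)]|=1$, determines the cardinality of every set $[(a_{1,1},a_{1,2}),\ldots,(a_{r,1},a_{r,2})]$ as an expression built solely from $q$, from $N=mn$, and from the integer indices $a_{i,j}$ — with no dependence on the chosen primitive element. Since Proposition \ref{splittingsubspaces} identifies the number of $\sigma$-splitting subspaces with $|[((n-1)m,(n-2)m),\ldots,(2m,m),(m,0)]|$, the Corollary follows at once. The only input that ties the recursion to $\sigma$ at all is Proposition \ref{expand}, which rests on Proposition \ref{preserve}; and Proposition \ref{preserve} holds verbatim for \emph{any} primitive $\sigma$ generating $\mb{F}_{q}(\sigma)=\mb{F}_{q^{N}}$.

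Concretely, I would induct on the ordering $\succ$ on tuples $\tau=[(a_{1,1},a_{1,2}),\ldots,(a_{r,1},a_{r,2})]$. Fix two primitive elements $\sigma,\sigma'$ generating $\mb{F}_{q^{N}}/\mb{F}_{q}$ and write $|[\tau]|_{\sigma}$ for the cardinality computed with respect to $\sigma$. For the base case $\tau=[(0,0)]$ one has $|[\tau]|_{\sigma}=1=|[\tau]|_{\sigma'}$. For the inductive step: the hypotheses of Lemma \ref{recursion} (the chain $N>a_{1,1}>a_{1,2}\geq a_{2,1}>\cdots$ and the inequalities $a_{i-1,1}\geq 2a_{i,1}-a_{i,2}$) are conditions on the integer indices only; if they fail, $[\tau]$ is empty for every primitive $\sigma$ and there is nothing to prove. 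Otherwise Lemma \ref{recursion} applies and gives
\begin{align*}
|[\tau]|_{\sigma}&=\sum_{(j_{1},\ldots,j_{r})\in C}|[(a_{1,2},j_{1}),\ldots,(a_{r,2},j_{r})]|_{\sigma}\prod_{i=1}^{r}\gauss{a_{i-1,2}-(2a_{i,2}-j_{i})}{a_{i,1}-(2a_{i,2}-j_{i})}\\
&\quad-\sum_{(k_{1},\ldots,k_{r})\in D\backslash(a_{1,2},\ldots,a_{r,2})}|[(a_{1,1},k_{1}),\ldots,(a_{r,1},k_{r})]|_{\sigma}\prod_{i=1}^{r}\gauss{k_{i}-a_{i+1,1}}{a_{i,2}-a_{i+1,1}}.
\end{align*}
The index sets $C$, $D$ and every Gaussian binomial coefficient above depend only on $q$, $N$ and the $a_{i,j}$, hence not on $\sigma$; and every tuple occurring on the right-hand side is strictly smaller than $\tau$ in $\succ$ — the $(a_{1,2},j_{1})$-tuples because $a_{1,2}<a_{1,1}$, and the $(a_{1,1},k_{1})$-tuples because, having excluded $(a_{1,2},\ldots,a_{r,2})$ from $D$, at least one $k_{i}$ strictly exceeds $a_{i,2}$. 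By the inductive hypothesis each such cardinality agrees for $\sigma$ and $\sigma'$, so $|[\tau]|_{\sigma}=|[\tau]|_{\sigma'}$, completing the induction.

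The point requiring the most care is the bookkeeping that makes this induction well-founded: one must know that every tuple produced on the right-hand side of Lemma \ref{recursion} is again a legitimate index, so that for it too we are in one of the three cases (empty, base case, or Lemma \ref{recursion} applies), and that it is genuinely earlier in $\succ$. This is precisely the combinatorial analysis carried out in the proof of Theorem \ref{alpha}, and here it suffices to invoke it. One could alternatively deduce the Corollary immediately from Theorem \ref{alpha} itself, whose closed form involves only $q$, $m$, $n$; but the observation worth recording is that $\sigma$-independence already drops out of the recursion, before it is solved.
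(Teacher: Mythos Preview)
Your argument is correct and is exactly the paper's own proof, spelled out in greater detail: the paper simply notes that neither the base case $|[(0,0)]|=1$ nor the recursion of Lemma~\ref{recursion} depends on $\sigma$. Your added discussion of well-foundedness is sound (the tuples produced by the recursion have the same length $r$ and entries bounded by $N$, so the descent is along a finite poset), but you need not invoke Theorem~\ref{alpha} for it, and indeed should not, since the Corollary precedes that theorem.
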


\begin{proof}
Neither the base case $|[(0,0)]|$ nor Lemma \ref{recursion} depends on the $\sigma$ chosen.
\end{proof}

\begin{Rem}
More generally, given an arbitrary invertible linear operator $T$ on $\mb{F}_{q^{mn}}$ over $\mb{F}_{q}$, we might consider how many ``$T$-splitting'' subspaces exist; that is, the number of $m$-dimensional subspaces $W$ such that 
\begin{align*}
W\oplus TW\oplus\cdots\oplus T^{n-1}W=\mb{F}_{q^{mn}}.
\end{align*}

We may then redefine $(\ ,\ ),[\ ,\ ],\left\langle\ ,\ \right\rangle$ by replacing the expressions $W+\sigma W$ with $W+TW$ and $W\cap\sigma^{-1}W$ with $W\cap T^{-1}W$.

Recall from Proposition \ref{preserve} and Lemma \ref{recursion} that when $T(v)=\sigma v$, the nonzero numbers $|[(a_{1,1},a_{1,2}),(a_{2,1},a_{2,2}),\ldots,(a_{r,1},a_{r,2})]|$ can be computed from the base case $|[0,0]|=1$. But if $T$ is any invertible linear operator, there may exist nonempty sets of the form $[(a_{1},a_{1}),(a_{2},a_{2}),\ldots,(a_{r},a_{r})]$ where $a_{r}\neq 0$. In fact, such sets cannot be computed recursively. For example
\begin{align*}
&|\left\langle[4,4],[2,2]\right\rangle|
\\&=|\left\langle[(4,4),4],[(2,2),2]\right\rangle|=|[(4,4),(2,2)]|
\\&=|\left\langle[4,(4,4)],[2,(2,2)]\right\rangle|=|[(4,4),(2,2)]|.
\end{align*}
We may still apply Lemma \ref{recursion} in the case of general $T$, however, with the cardinalities of these sets as additional base cases.
\end{Rem}

\section{Solution to the Recursion}
The next two lemmas are special cases of the following $q$-Chu-Vandermonde  identity for $N$ a nonnegative integer \cite[p.~354]{BHS}.
\begin{align*}
\ _2\phi_1
\left(
\begin{matrix}q^{-N},&a;&cq^{N}/a\\
&c&
\end{matrix}
\right)&:=\sum_{m=0}^{N}{\frac{(q^{-N};q)_{m}(a;q)_{m}}{(q;q)_{m}(c;q)_{m}}\left(\frac{cq^{N}}{a}\right)^{m}}
\\&=\frac{(c/a;q)_N}{(c;q)_N}.
\end{align*}

\begin{Lem} 
\label{firstidentity}
If $C\le B-1\le D-1\le A-1$ are non-negative integers, then

\begin{align*}
&\sum_{s=C}^{B-1} \gauss{A-B-1}{B-s-1}\gauss{B}{s} 
\gauss{s}{C}\gauss{A-(2B-s)}{D-(2B-s)} q^{(B-s)(B-s-1)}\\
&=
\frac{[B]_q}{[D-C]_q}
\gauss{B-1}{C}\gauss{A-B-1}{D-B-1}\gauss{D-C}{B-C}.
\end{align*}
\end{Lem}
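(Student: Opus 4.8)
The plan is to collapse the four $q$-binomial coefficients in the summand to a single convolution on which $q$-Chu-Vandermonde applies directly. First I would exploit the two ``telescoping'' relations among the arguments: since $A-(2B-s)=(A-B-1)-(B-s-1)$ and $D-(2B-s)=(D-B-1)-(B-s-1)$, the elementary identity $\gauss{n}{j}\gauss{n-j}{p-j}=\gauss{n}{p}\gauss{p}{j}$ (valid with the convention $\gauss{n}{k}=0$ for $k<0$) yields
\[
\gauss{A-B-1}{B-s-1}\gauss{A-(2B-s)}{D-(2B-s)}=\gauss{A-B-1}{D-B-1}\gauss{D-B-1}{B-s-1},
\]
and similarly $\gauss{B}{s}\gauss{s}{C}=\gauss{B}{C}\gauss{B-C}{s-C}$. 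Pulling the $s$-independent factors $\gauss{A-B-1}{D-B-1}$ and $\gauss{B}{C}$ out of the sum reduces the left-hand side to
\[
\gauss{A-B-1}{D-B-1}\gauss{B}{C}\sum_{s=C}^{B-1}\gauss{D-B-1}{B-s-1}\gauss{B-C}{s-C}q^{(B-s)(B-s-1)}.
\]

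Second, I would reindex the inner sum by $t=s-C$, so that it takes the form $\sum_{t}\gauss{L}{K-t}\gauss{K+1}{t}\,q^{(K+1-t)(K-t)}$ with $K=B-1-C$ and $L=D-B-1$. This is a terminating instance of the quoted $q$-Chu-Vandermonde identity, equivalently the classical $q$-Vandermonde convolution $\sum_{j}q^{(m-j)(k-j)}\gauss{m}{j}\gauss{n}{k-j}=\gauss{m+n}{k}$ (here $m=K+1$, $n=L$, $k=K$): rewriting the two $q$-binomials with $\gauss{N}{m}=(q;q)_N/((q;q)_m(q;q)_{N-m})$ and the reversal $(q^{-N};q)_m=(-1)^m q^{\binom{m}{2}-Nm}(q;q)_N/(q;q)_{N-m}$, one sees that the quadratic factor $q^{(K+1-t)(K-t)}$ is exactly what cancels the signs and powers of $q$ produced by the two reversals and leaves the balanced argument of the identity, so the sum evaluates to $\gauss{K+L+1}{K}=\gauss{D-C-1}{B-C-1}$. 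Hence the left-hand side of the Lemma equals $\gauss{A-B-1}{D-B-1}\gauss{B}{C}\gauss{D-C-1}{B-C-1}$.

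Finally, I would conclude with the purely formal identity
\[
\gauss{B}{C}\gauss{D-C-1}{B-C-1}=\frac{[B]_q}{[D-C]_q}\gauss{B-1}{C}\gauss{D-C}{B-C},
\]
which follows immediately by expanding both sides in terms of the symbols $(q;q)_\bullet$ and using $(q;q)_n=(1-q^n)(q;q)_{n-1}$ together with $[n]_q=(1-q^n)/(1-q)$. The main obstacle is not conceptual but bookkeeping: one must check the two telescoping merges hold honestly, including degenerate boundary values such as $D=B$ (where $\gauss{A-B-1}{D-B-1}=\gauss{A-B-1}{-1}=0$ and both sides vanish, so nothing is lost); one must take care to match the exact normalization under which the quoted $q$-Chu-Vandermonde identity is stated when reindexing; and one should record that $D-C\ge 1$ (forced by $C\le B-1\le D-1$) so that the factor $[D-C]_q$ appearing in the denominator of the stated formula is nonzero. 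I do not anticipate any genuinely hard step.
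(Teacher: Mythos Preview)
Your proposal is correct and follows essentially the same route as the paper: the paper itself gives no detailed argument but merely asserts that the lemma is a special case of the $q$-Chu--Vandermonde identity cited from Gasper--Rahman, and your proof is precisely an explicit unwinding of that claim. The two ``telescoping'' merges you perform (via $\gauss{n}{j}\gauss{n-j}{p-j}=\gauss{n}{p}\gauss{p}{j}$ and $\gauss{n}{j}\gauss{j}{k}=\gauss{n}{k}\gauss{n-k}{j-k}$) are exactly what is needed to isolate a single $q$-Vandermonde convolution, and your final rewriting $\gauss{B}{C}\gauss{D-C-1}{B-C-1}=\frac{[B]_q}{[D-C]_q}\gauss{B-1}{C}\gauss{D-C}{B-C}$ is an immediate $q$-factorial identity.
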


\begin{Lem} 
\label{secondidentity}
If $C\le D\le B-1\le A-1$ are non-negative integers, then
\begin{align*}
&\sum_{s=D}^{B-1} \gauss{A-B-1}{B-s-1}\gauss{B}{s} 
\gauss{s}{C}\gauss{s-C}{D-C} q^{(B-s)(B-s-1)}
\\&=\frac{[B]_q}{[A-D]_q}
\gauss{B-1}{C}\gauss{B-C-1}{D-C}\gauss{A-D}{B-D}.
\end{align*}
\end{Lem}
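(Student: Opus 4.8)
The plan is to reduce the four-fold product of Gaussian binomials in the summand to a two-fold product by factoring out every quantity that does not depend on the summation index $s$, to reshape the right-hand side in the same way, and then to recognize what remains as a terminating instance of the quoted $q$-Chu--Vandermonde summation. For the first step I would apply twice the standard identity $\gauss{n}{k}\gauss{k}{j}=\gauss{n}{j}\gauss{n-j}{k-j}$ (immediate from $\gauss{n}{k}=(q;q)_n/((q;q)_k(q;q)_{n-k})$): first $\gauss{s}{C}\gauss{s-C}{D-C}=\gauss{s}{D}\gauss{D}{C}$, then $\gauss{B}{s}\gauss{s}{D}=\gauss{B}{D}\gauss{B-D}{s-D}$, whence
\[
\gauss{B}{s}\gauss{s}{C}\gauss{s-C}{D-C}=\gauss{B}{D}\gauss{D}{C}\gauss{B-D}{s-D},
\]
so that the factor $\gauss{B}{D}\gauss{D}{C}$ leaves the sum.

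On the right-hand side the same identity gives $\gauss{B-1}{C}\gauss{B-C-1}{D-C}=\gauss{B-1}{D}\gauss{D}{C}$, so after cancelling $\gauss{D}{C}$ and using the elementary evaluations $\gauss{B-1}{D}/\gauss{B}{D}=[B-D]_q/[B]_q$ and $\tfrac{[B-D]_q}{[A-D]_q}\gauss{A-D}{B-D}=\gauss{A-D-1}{B-D-1}$, the Lemma becomes equivalent to
\[
\sum_{s=D}^{B-1}\gauss{A-B-1}{B-s-1}\gauss{B-D}{s-D}\,q^{(B-s)(B-s-1)}=\gauss{A-D-1}{B-D-1}.
\]
Substituting $v=B-s$ and rewriting $\gauss{A-B-1}{v-1}=\gauss{A-B-1}{(A-B)-v}$ turns the left side into $\sum_v\gauss{B-D}{v}\gauss{A-B-1}{(A-B)-v}q^{v(v-1)}$, which is precisely the $q$-Vandermonde sum $\gauss{m+n}{r}=\sum_k\gauss{m}{k}\gauss{n}{r-k}q^{k(n-r+k)}$ at $m=B-D$, $n=A-B-1$, $r=A-B$ — the specialization of the quoted ${}_2\phi_1$ identity in which both numerator parameters are negative integral powers of $q$ — and it evaluates to $\gauss{A-D-1}{A-B}=\gauss{A-D-1}{B-D-1}$, as required.

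What I expect to be delicate is purely bookkeeping. If, instead of the reduction above, one feeds the summand directly into the quoted ${}_2\phi_1$, the exponent $\binom{m}{2}-Nm$ carried by $(q^{-N};q)_m$ must cancel against the $(cq^{N}/a)^m$ prefactor so that only $q^{(B-s)(B-s-1)}$ survives, and in either route one must check that the range $D\le s\le B-1$ is exactly where the terms are nonzero and that the hypotheses $C\le D\le B-1\le A-1$ keep every Gaussian binomial — on both sides, and after each rewrite — within the range where the usual conventions make the manipulations valid (the boundary cases, such as $A=B$, should be checked against whatever convention for the Gaussian binomial coefficient is in force). Lemma \ref{firstidentity} should fall to the identical scheme: factor out the $s$-independent binomials, carry the extra factor $\gauss{A-(2B-s)}{D-(2B-s)}$ through the reduction, and finish with one application of $q$-Chu--Vandermonde.
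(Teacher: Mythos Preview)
Your argument is correct and matches the paper's approach: the paper gives no proof beyond the assertion that the lemma is a special case of the quoted $q$-Chu--Vandermonde identity, and your reduction supplies precisely those missing details. The two applications of trinomial revision, the simplification of the right-hand side to $\gauss{A-D-1}{B-D-1}$, and the final identification with the $q$-Vandermonde convolution at $m=B-D$, $n=A-B-1$, $r=A-B$ all check out; the summation range matches the natural support once $A>B$, and your flagged boundary case $A=B$ does not arise in the paper's application (there one has $A\ge 2B-D>B$).
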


We now give our main theorem.
\begin{Thm}\label{alpha}
Suppose that 
\begin{align*}
N>a_{1,1}>a_{1,2}\ge a_{2,1}>a_{2,2}\ge \cdots \ge a_{r,1}>a_{r,2}\ge 0,
\\
a_{0,1}=a_{0,2}=N, \quad a_{r+1,1}=a_{r+1,2}=0.
\end{align*} 

Then
\begin{align}
|[(a_{1,1},a_{1,2}),\ldots, (a_{r,1},a_{r,2})]|=
\frac{\gauss{N}{1}}{\gauss{a_{1,1}}{1}} \frac{\prod_{i=0}^{r-1} 
\gauss{a_{i,1}-a_{i+1,1}-1}{a_{i+1,1}-a_{i+1,2}-1}
\gauss{a_{i+1,1}}{a_{i+1,2}}
\gauss{a_{i+1,2}}{a_{i+2,1}}}
{\prod_{i=1}^{r-1} \gauss{a_{i,1}-1}{a_{i+1,1}-1}}q^E\label{formula},
\end{align}

where
\begin{align*}
E=\sum_{i=1}^r (a_{i,1}-a_{i,2})(a_{i,1}-a_{i,2}-1).
\end{align*}
\end{Thm}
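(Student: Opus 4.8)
The plan is to prove \eqref{formula} by strong induction on the ordering of tuples defined just before Lemma \ref{recursion}, using the recursion of that lemma to reduce the inductive step to the two $q$-Chu-Vandermonde specializations, Lemmas \ref{firstidentity} and \ref{secondidentity}. The base case is $[(0,0)]$, which has one element, and with the conventions that $q$-binomials with a negative bottom index vanish and empty products equal $1$ the right side of \eqref{formula} returns $1$. I would first record a compatibility fact: after the shift $i\mapsto i-1$ the factor $\gauss{a_{i,1}-a_{i+1,1}-1}{a_{i+1,1}-a_{i+1,2}-1}$ becomes $\gauss{a_{i-1,1}-a_{i,1}-1}{a_{i,1}-a_{i,2}-1}$, which vanishes exactly when $a_{i-1,1}<2a_{i,1}-a_{i,2}$, so \eqref{formula} already gives $0$ on the empty tuples of Lemma \ref{recursion}; hence in the inductive step I may assume the conditions $a_{i-1,1}\ge 2a_{i,1}-a_{i,2}$ hold and invoke the recursion.

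For the inductive step, fix a tuple and apply Lemma \ref{recursion}, writing its count as the sum over $C$ minus the sum over $D\setminus\{(a_{1,2},\ldots,a_{r,2})\}$. Every tuple occurring in these sums is strictly earlier in the ordering: the $C$-tuples $((a_{1,2},j_1),\ldots,(a_{r,2},j_r))$ have a strictly smaller leading first coordinate $a_{1,2}<a_{1,1}$, while the $D$-tuples $((a_{1,1},k_1),\ldots,(a_{r,1},k_r))$ share all first coordinates with the original but have a strictly larger second coordinate at the first index where they differ from $(a_{1,2},\ldots,a_{r,2})$. So by the inductive hypothesis I may substitute \eqref{formula} for each of them. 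The key structural point is that $C$ and $D$ are Cartesian products of integer intervals whose endpoints involve only the fixed $a$'s, and that after the substitution each summand factors as a prefactor independent of the summation variables times $\prod_{i=1}^{r}$ of a function of $j_i$ alone (respectively $k_i$ alone); hence each multi-sum collapses into the prefactor times a product of $r$ single-variable sums.

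Each single sum in the $C$-part---after discarding the terms with $j_i<2a_{i,2}-a_{i,1}$, which vanish because of a negative bottom index---is precisely the left side of Lemma \ref{firstidentity} with $(A,B,C,D)=(a_{i-1,2},a_{i,2},a_{i+1,2},a_{i,1})$; each single sum in the $D$-part is the left side of Lemma \ref{secondidentity} with $(A,B,C,D)=(a_{i-1,1},a_{i,1},a_{i+1,1},a_{i,2})$. The monotonicity hypotheses of both lemmas follow from $N>a_{1,1}>a_{1,2}\ge\cdots\ge a_{r,1}>a_{r,2}\ge 0$ together with $a_{0,1}=a_{0,2}=N$, $a_{r+1,1}=a_{r+1,2}=0$, and the $q$-powers $q^{(B-s)(B-s-1)}$ absorb precisely the exponent $E$ carried by the earlier tuples. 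One bookkeeping point: the term $(k_1,\ldots,k_r)=(a_{1,2},\ldots,a_{r,2})$ removed from the $D$-sum equals, after substitution, the right side of \eqref{formula} for the original tuple---call it $X$---since the accompanying $q$-binomials $\gauss{a_{i,2}-a_{i+1,1}}{a_{i,2}-a_{i+1,1}}$ are all $1$. Writing $A_1$ for the value of the $C$-sum and $A_2$ for the prefactor times the product over $i$ of the Lemma \ref{secondidentity} values, the $D\setminus\{(a_{1,2},\ldots,a_{r,2})\}$-sum equals $A_2-X$, so Lemma \ref{recursion} becomes
\begin{align*}
|[(a_{1,1},a_{1,2}),\ldots,(a_{r,1},a_{r,2})]|=A_1-(A_2-X)=A_1-A_2+X,
\end{align*}
and the inductive step is complete once we verify the pure $q$-binomial identity $A_1=A_2$, namely
\begin{align*}
\frac{\gauss{N}{1}}{\gauss{a_{1,2}}{1}}\,\frac{\prod_{i=1}^{r}S^C_i}{\prod_{i=1}^{r-1}\gauss{a_{i,2}-1}{a_{i+1,2}-1}}=\frac{\gauss{N}{1}}{\gauss{a_{1,1}}{1}}\,\frac{\prod_{i=1}^{r}S^D_i}{\prod_{i=1}^{r-1}\gauss{a_{i,1}-1}{a_{i+1,1}-1}},
\end{align*}
where $S^C_i$ and $S^D_i$ denote the right sides of Lemmas \ref{firstidentity} and \ref{secondidentity}.

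The main obstacle is this last identity, which carries no free summations; I would prove it by a secondary induction on $r$ (or by expanding both sides in $q$-factorials and matching), cancelling $\gauss{N}{1}$ and using standard simplifications such as $\gauss{n}{k}/\gauss{n}{k-1}=[n-k+1]_q/[k]_q$ and $[k]_q\gauss{n}{k}=[n]_q\gauss{n-1}{k-1}$, with base case the elementary identity $\tfrac{1}{[a_{1,1}]_q}\gauss{N-a_{1,2}-1}{a_{1,1}-a_{1,2}-1}\gauss{a_{1,1}}{a_{1,2}}=\tfrac{1}{[N-a_{1,2}]_q}\gauss{a_{1,1}-1}{a_{1,2}}\gauss{N-a_{1,2}}{a_{1,1}-a_{1,2}}$. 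The remaining care lies in the degenerate case $a_{r,2}=0$---where the $C$-tuple terminates in $(0,0)$ and must first be collapsed via the first equality of Lemma \ref{recursion} before \eqref{formula} applies, so that the $i=r$ single sum degenerates to the $C=0$ instance of Lemma \ref{firstidentity}---and in steadily tracking the index shifts $i\mapsto i\pm 1$ and the boundary conventions $a_{0,1}=a_{0,2}=N$, $a_{r+1,1}=a_{r+1,2}=0$, $j_{r+1}=k_{r+1}=0$. The conceptual content---strong induction plus the factorization of Lemma \ref{recursion}'s multi-sums into products of single $q$-Chu-Vandermonde sums---is short; all the length is in that verification.
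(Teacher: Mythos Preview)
Your proposal is correct and follows essentially the same route as the paper: strong induction via the recursion of Lemma~\ref{recursion}, substitution of \eqref{formula} into the $C$- and $D$-sums, factorization into products of single sums evaluated by Lemmas~\ref{firstidentity} and \ref{secondidentity}, and verification of the resulting closed-form identity $A_1=A_2$ by expansion in $q$-factorials, with the degenerate case $a_{r,2}=0$ treated separately. The paper carries out the $A_1=A_2$ verification directly in $q$-factorials (Appendix~\ref{prooflr}) rather than by a secondary induction on $r$, but the content is the same.
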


\begin{Cor}[Splitting Subspace Conjecture]
\label{grssc}
We have, when $N\geq mn$, the equality
\begin{align*}
|[((n-1)m,(n-2)m),\ldots, (2m,m), (m,0)]|&=\frac{\gauss{N}{1}}{\gauss{m}{1}}\gauss{N-mn+m-1}{m-1}q^{m(m-1)(n-1)}
\end{align*}
In particular, when $N=mn$,
\begin{align*}
|[((n-1)m,(n-2)m),\ldots, (2m,m), (m,0)]|
=\frac{\gauss{mn}{1}}{\gauss{m}{1}} q^{m(m-1)(n-1)}.
\end{align*}
\end{Cor}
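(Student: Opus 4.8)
The plan is to obtain Corollary~\ref{grssc} directly from Theorem~\ref{alpha} by specializing formula~\eqref{formula} to the tuple attached to splitting subspaces and then collapsing the resulting product of Gaussian binomials; Proposition~\ref{splittingsubspaces} then supplies the interpretation in terms of $\sigma$-splitting subspaces.

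Concretely, I would invoke Theorem~\ref{alpha} with $r=n-1$ and
\[
a_{i,1}=(n-i)m,\qquad a_{i,2}=(n-1-i)m\qquad(1\le i\le n-1),
\]
so that $(a_{1,1},a_{1,2}),\ldots,(a_{r,1},a_{r,2})=((n-1)m,(n-2)m),\ldots,(m,0)$ and the boundary values are $a_{0,1}=a_{0,2}=N$, $a_{n,1}=a_{n,2}=0$, as the theorem requires. The hypotheses hold: $a_{i,1}-a_{i,2}=m>0$ for all $i$, $a_{i,2}=a_{i+1,1}$ for $1\le i\le n-2$, and $N>(n-1)m=a_{1,1}$ since $N\ge mn$. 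Because $a_{i,1}-a_{i,2}=m$ throughout, the exponent in \eqref{formula} is $E=\sum_{i=1}^{n-1}m(m-1)=m(m-1)(n-1)$, which already yields the stated power of $q$.

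The remaining task is to simplify the product in \eqref{formula}. Since $a_{i+1,2}=a_{i+2,1}$, every factor $\gauss{a_{i+1,2}}{a_{i+2,1}}$ equals $1$; since $a_{i,1}-a_{i+1,1}=m$ for $1\le i\le r-1$, each factor $\gauss{a_{i,1}-a_{i+1,1}-1}{a_{i+1,1}-a_{i+1,2}-1}$ with $i\ge1$ equals $\gauss{m-1}{m-1}=1$, while the $i=0$ factor is $\gauss{N-(n-1)m-1}{m-1}=\gauss{N-mn+m-1}{m-1}$. After reindexing the surviving products by $j=n-1-i$ and by $j=n-i$ respectively, \eqref{formula} becomes
\[
\frac{\gauss{N}{1}}{\gauss{(n-1)m}{1}}\,\gauss{N-mn+m-1}{m-1}\,\frac{\prod_{j=1}^{n-1}\gauss{jm}{(j-1)m}}{\prod_{j=2}^{n-1}\gauss{jm-1}{(j-1)m-1}}\,q^{m(m-1)(n-1)}.
\]
Splitting off the $j=1$ term $\gauss{m}{0}=1$ and using the elementary identity $\gauss{jm}{(j-1)m}\big/\gauss{jm-1}{(j-1)m-1}=[jm]_q/[(j-1)m]_q$, the ratio of the two products telescopes to $[(n-1)m]_q/[m]_q=\gauss{(n-1)m}{1}/\gauss{m}{1}$. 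This cancels the $\gauss{(n-1)m}{1}$ in the prefactor and leaves exactly $\tfrac{\gauss{N}{1}}{\gauss{m}{1}}\gauss{N-mn+m-1}{m-1}q^{m(m-1)(n-1)}$, the first claimed equality. Taking $N=mn$ makes $\gauss{N-mn+m-1}{m-1}=\gauss{m-1}{m-1}=1$, which gives the second; by Proposition~\ref{splittingsubspaces} this count is the number of $\sigma$-splitting subspaces, and $\gauss{mn}{1}/\gauss{m}{1}=(q^{mn}-1)/(q^m-1)$ recovers the Ghorpade--Ram expression.

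This part of the argument is purely mechanical once Theorem~\ref{alpha} is available; the only points demanding care are the boundary terms of the product (the nontrivial $i=0$ factor $\gauss{N-mn+m-1}{m-1}$ and the trivial $i=r-1$ factor $\gauss{a_{r,2}}{a_{r+1,1}}=\gauss{0}{0}$) and the correct index matching in the telescoping. There is no real obstacle at this stage: all the difficulty sits in the proof of Theorem~\ref{alpha} itself---an induction along the ordering of tuples via the recursion of Lemma~\ref{recursion}, in which the inner sums over the index sets $C$ and $D$ are evaluated by the $q$-Chu--Vandermonde identities of Lemmas~\ref{firstidentity} and~\ref{secondidentity}---which we are taking as given.
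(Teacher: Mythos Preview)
Your proposal is correct and follows essentially the same approach as the paper: specialize Theorem~\ref{alpha} to $a_{i,1}=(n-i)m$, $a_{i,2}=(n-1-i)m$, observe that the factors $\gauss{a_{i+1,2}}{a_{i+2,1}}$ and the $i\ge 1$ factors $\gauss{a_{i,1}-a_{i+1,1}-1}{a_{i+1,1}-a_{i+1,2}-1}$ all collapse to $1$, and then telescope the remaining ratio of Gaussian products to $[(n-1)m]_q/[m]_q$. Your write-up is in fact slightly more explicit than the paper's about verifying the hypotheses of Theorem~\ref{alpha} and about the identity $\gauss{jm}{(j-1)m}\big/\gauss{jm-1}{(j-1)m-1}=[jm]_q/[(j-1)m]_q$ driving the telescoping, but the argument is the same.
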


\begin{proof}
From plugging into \eqref{formula}
\begin{align*}
&[((n-1)m,(n-2)m),((n-2)m,(n-3)m),\ldots,(2m,m),(m,0)]
\\&=
\frac{\gauss{N}{1}}{\gauss{(n-1)m}{1}}\frac{\left(\gauss{N-(n-1)m-1}{m-1}\gauss{m-1}{m-1}\cdots \gauss{m-1}{m-1}\right)\left(\gauss{(n-1)m}{(n-2)m}\cdots\gauss{m}{0}\right)}{\gauss{(n-1)m-1}{(n-2)m-1}\cdots\gauss{m-1}{0}}q^{\sum_{i=1}^{n-1}{m(m-1)}}
\\&=
\frac{\gauss{N}{1}}{\gauss{(n-1)m}{1}}\gauss{N-(n-1)m-1}{m-1}\frac{\gauss{(n-1)m}{(n-2)m}\cdots\gauss{2m}{m}}{\gauss{(n-1)m-1}{(n-2)m-1}\cdots\gauss{m-1}{0}}q^{m(m-1)(n-1)}.
\\&=\frac{\gauss{N}{1}}{\gauss{(n-1)m}{1}}\gauss{N-(n-1)m-1}{m-1}\frac{1-q^{(n-1)m}}{1-q^{m}}q^{m(m-1)(n-1)}
\\&=\frac{\gauss{N}{1}}{\gauss{m}{1}}\gauss{N-mn+m-1}{m-1}q^{m(m-1)(n-1)}.
\end{align*}
\end{proof}

\begin{proof}[Proof of Theorem \ref{alpha}]
We verify that \eqref{formula} satisfies the 
recursion in Lemma \ref{recursion}.

Recall that
\begin{align*}
\ref*{left}&=\sum_{(j_{1},\ldots,j_{r})\in C}{|[(a_{1,2},j_{1}),(a_{2,2},j_{2}),\ldots,(a_{r,2},j_{r})]|\prod_{i=1}^{r}{\gauss{a_{i-1,2}-(2a_{i,2}-j_{i})}{a_{i,1}-(2a_{i,2}-j_{i})}}},
\\\ref*{right}&=\sum_{(k_{1},\ldots,k_{r})\in D} {|[(a_{1,1},k_{1}),(a_{2,1},k_{2}),\ldots,(a_{r,1},k_{r})]|\prod_{i=1}^{r}{\gauss{k_{i}-a_{i+1,1}}{a_{i,2}-a_{i+1,1}}}}.
\end{align*}

We first check equality when $a_{r,2}\neq 0$ so that the expressions obtained for \eqref{left} and \eqref{right} using \eqref{formula} do not contain negative $q$-binomials.

Substituting \eqref{formula} and applying Lemma \ref{firstidentity} to the resulting independent sums in \eqref{left} gives
\begin{align*}
\ref*{left}&=
\frac{\gauss{N}{1}}{\gauss{a_{1,2}}{1}}
\frac{\prod_{i=1}^{r}{\sum_{j_{i}}{\gauss{a_{i-1,2}-a_{i,2}-1}{a_{i,2}-j_{i}-1}
\gauss{a_{i,2}}{j_{i}}
\gauss{j_{i}}{a_{i+1,2}}
\gauss{a_{i-1,2}-(2a_{i,2}-j_{i})}{a_{i,1}-(2a_{i,2}-j_{i})}q^{(a_{i,2}-j_{i})(a_{i,2}-j_{i}-1)}}}}
{\prod_{i=1}^{r-1} \gauss{a_{i,2}-1}{a_{i+1,2}-1}}
\\&=
\frac{\gauss{N}{1}}{\gauss{a_{1,2}}{1}}
\prod_{i=1}^r \frac{ [a_{i,2}]_q}{[a_{i,1}-a_{i+1,2}]_q}
\gauss{a_{i,2}-1}{a_{i+1,2}}
\gauss{a_{i-1,2}-a_{i,2}-1}{a_{i,1}-a_{i,2}-1}
\gauss{a_{i,1}-a_{i+1,2}}{a_{i,2}-a_{i+1,2}}
\prod_{i=1}^{r-1} \gauss{a_{i,2}-1}{a_{i+1,2}-1}^{-1}.
\end{align*}

Substituting \eqref{formula} and applying Lemma \ref{secondidentity} to the resulting independent sums in \eqref{right} gives
\begin{align*}
\ref*{right}&=
\frac{\gauss{N}{1}}{\gauss{a_{1,1}}{1}}
\frac{\prod_{i=1}^{r}{\sum_{k_{i}}{\gauss{a_{i-1,1}-a_{i,1}-1}{a_{i,1}-k_{i}-1}
\gauss{a_{i,1}}{k_{i}}
\gauss{k_{i}}{a_{i+1,1}}
\gauss{k_{i}-a_{i+1,1}}{a_{i,2}-a_{i+1,1}}q^{(a_{i,1}-k_{i})(a_{i,1}-k_{i}-1)}}}}
{\prod_{i=1}^{r-1} \gauss{a_{i,2}-1}{a_{i+1,2}-1}}
\\&=
\frac{\gauss{N}{1}}{\gauss{a_{1,1}}{1}}
\prod_{i=1}^r \frac{ [a_{i,1}]_q}{[a_{i-1,1}-a_{i,2}]_q}
\gauss{a_{i,1}-1}{a_{i+1,1}}
\gauss{a_{i,1}-a_{i+1,1}-1}{a_{i,2}-a_{i+1,1}}
\gauss{a_{i-1,1}-a_{i,2}}{a_{i,1}-a_{i,2}}
\prod_{i=1}^{r-1} \gauss{a_{i,1}-1}{a_{i+1,1}-1}^{-1}.
\end{align*}

After simplification (see Appendix \ref{prooflr})
\begin{align}
\ref*{left}=\ref*{right}=
\frac{[N]_{q} [N-a_{1,2}-1]!_{q}}{[N-a_{1,1}]!_{q} [a_{r,2}]!_{q}}
\prod_{i=1}^r 
\frac{[a_{i,1}-a_{i+1,2}-1]!_{q}}{[a_{i,1}-a_{i,2}-1]!_{q} [a_{i,1}-a_{i,2}]!_{q}}
\prod_{i=2}^r
\frac{1}{[a_{i-1,2}-a_{i,1}]!_{q}}.\label{identity}
\end{align}

Finally, we deal with the case $a_{r,2}=0$, when the expression obtained by directly applying \eqref{formula} to \eqref{left} may contain negative q-binomials (\eqref{right} is unaffected). Suppose $r>1$. By definition, we know that 
\begin{align*}
|\left\langle[a_{1,1},a_{1,2}],[a_{2,1},a_{2,2}],\ldots,[a_{r,1},0]\right\rangle|=|\left\langle[a_{1,1},a_{1,2}],[a_{2,1},a_{2,2}],\ldots,[a_{r-1,1},a_{r-1,2}]\right\rangle|\gauss{a_{r-1,2}}{a_{r,1}}.
\end{align*}
This means that
\begin{align*}
\ref*{left}&=\sum_{(j_{1},\ldots,j_{r})\in C}{|[(a_{1,2},j_{1}),(a_{2,2},j_{2}),\ldots,(a_{r,2},j_{r})]|\prod_{i=1}^{r}{\gauss{a_{i-1,2}-(2a_{i,2}-j_{i})}{a_{i,1}-(2a_{i,2}-j_{i})}}}
\\&=
|\left\langle[a_{1,1},a_{1,2}],,\ldots,[a_{r-1,1},a_{r-1,2}]\right\rangle|\gauss{a_{r-1,2}}{a_{r,1}}.
\end{align*}
Since $a_{r-1,2}\geq a_{r,1}>0$, we may apply our previous result to obtain 
\begin{align*}
&|\left\langle[a_{1,1},a_{1,2}],\ldots,[a_{r-1,1},a_{r-1,2}]\right\rangle|\gauss{a_{r-1,2}}{a_{r,1}}
\\&=
\gauss{a_{r-1,2}}{a_{r,1}}\frac{[N]_{q} [N-a_{1,2}-1]!_q}{[N-a_{1,1}]!_q [a_{r-1,2}]!_q}
\prod_{i=1}^{r-1} 
\frac{[a_{i,1}-a_{i+1,2}-1]!_{q}}{[a_{i,1}-a_{i,2}-1]!_{q} [a_{i,1}-a_{i,2}]!_{q}}
\prod_{i=2}^{r-1}
\frac{1}{[a_{i-1,2}-a_{i,1}]!_{q}}.
\end{align*}
We wish to show that this is equal to 
\begin{align*}
\frac{[N]_{q} [N-a_{1,2}-1]!_{q}}{[N-a_{1,1}]!_{q} [a_{r,2}]!_{q}}
\prod_{i=1}^r 
\frac{[a_{i,1}-a_{i+1,2}-1]!_{q}}{[a_{i,1}-a_{i,2}-1]!_{q} [a_{i,1}-a_{i,2}]!_{q}}
\prod_{i=2}^r
\frac{1}{[a_{i-1,2}-a_{i,1}]!_{q}}
\end{align*}
when $a_{r,2}=0$. 
Take the quotient to find
\begin{align*}
&\frac{\frac{[N]_{q} [N-a_{1,2}-1]!_{q}}{[N-a_{1,1}]!_{q} [a_{r,2}]!_{q}}
\prod_{i=1}^r 
\frac{[a_{i,1}-a_{i+1,2}-1]!_{q}}{[a_{i,1}-a_{i,2}-1]!_{q} [a_{i,1}-a_{i,2}]!_{q}}
\prod_{i=2}^r
\frac{1}{[a_{i-1,2}-a_{i,1}]!_{q}}}{\gauss{a_{r-1,2}}{a_{r,1}}\frac{[N]_{q} [N-a_{1,2}-1]!_{q}}{[N-a_{1,1}]!_{q} [a_{r-1,2}]!_{q}}
\prod_{i=1}^{r-1} 
\frac{[a_{i,1}-a_{i+1,2}-1]!_{q}}{[a_{i,1}-a_{i,2}-1]!_{q} [a_{i,1}-a_{i,2}]!_{q}}
\prod_{i=2}^{r-1}
\frac{1}{[a_{i-1,2}-a_{i,1}]!_{q}}}
\\&=
\frac{[a_{r-1,2}]!_{q}\frac{[a_{r,1}-1]!_{q}}{[a_{r,1}-1]!_{q} [a_{r,1}]!_{q}}
\frac{1}{[a_{r-1,2}-a_{r,1}]!_{q}}}{\gauss{a_{r-1,2}}{a_{r,1}}}
\\&=1,
\end{align*}
as desired. Therefore, when $a_{r,2}=0$, the equality
\begin{align*}
\ref*{left}&=\sum_{(j_{1},\ldots,j_{r})\in C}{|[[a_{1,2},j_{1}),[a_{2,2},j_{2}),\ldots,[a_{r,2},j_{r})]|\prod_{i=1}^{r}{\gauss{a_{i-1,2}-(2a_{i,2}-j_{i})}{a_{i,1}-(2a_{i,2}-j_{i})}}}
\\&=\frac{[N]_{q} [N-a_{1,2}-1]!_{q}}{[N-a_{1,1}]!_{q} [a_{r,2}]!_{q}}
\prod_{i=1}^r 
\frac{[a_{i,1}-a_{i+1,2}-1]!_{q}}{[a_{i,1}-a_{i,2}-1]!_{q} [a_{i,1}-a_{i,2}]!_{q}}
\prod_{i=2}^r
\frac{1}{[a_{i-1,2}-a_{i,1}]!_{q}}
\\&=
\ref*{right}
\end{align*}
still holds.

Finally, suppose $r=1$. Then,
\begin{align*}
\ref*{left}&=|[(0,0)]|\gauss{N}{a_{1,1}}=\gauss{N}{a_{1,1}}.
\end{align*}
If we plug in $\langle [a_{1,1},0]\rangle$ into \eqref{identity}, then we get $\gauss{N}{a_{1,1}}$, as desired. 
\end{proof}

\begin{Cor}
\label{RLequality}
The numbers
\begin{align*}
|\left\langle[a_{1,1},a_{1,2}], [a_{21},a_{22}], \ldots, [a_{r,1},a_{r,2}]\right\rangle|
\end{align*}
are given by
\begin{align*}
\ref{left}=\ref{right}=
\frac{[N]_{q} [N-a_{1,2}-1]!_{q}}{[N-a_{1,1}]!_{q} [a_{r,2}]!_{q}}
\prod_{i=1}^r 
\frac{[a_{i,1}-a_{i+1,2}-1]!_{q}}{[a_{i,1}-a_{i,2}-1]!_{q} [a_{i,1}-a_{i,2}]!_{q}}
\prod_{i=2}^r
\frac{1}{[a_{i-1,2}-a_{i,1}]!_{q}}.
\end{align*}

\end{Cor}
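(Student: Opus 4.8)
The plan is to read the corollary off from the computations already carried out in the proof of Theorem~\ref{alpha}, together with the observation that the expressions \eqref{right} and \eqref{left} appearing there are, prior to any subtraction, honest evaluations of the left-hand side. Indeed, in the proof of Lemma~\ref{recursion} one expands $|\left\langle[a_{1,1},a_{1,2}],\ldots,[a_{r,1},a_{r,2}]\right\rangle|$ by applying Proposition~\ref{expand} to the left-hand entries $a_{i,1}$ of the brackets, obtaining \eqref{right}, and alternatively by applying it to the right-hand entries $a_{i,2}$, obtaining \eqref{left}; both chains of equalities begin at $|\left\langle[a_{1,1},a_{1,2}],\ldots,[a_{r,1},a_{r,2}]\right\rangle|$, so this number equals each of \eqref{right} and \eqref{left}.

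First I would invoke Theorem~\ref{alpha}, now established, to substitute the closed form \eqref{formula} for every factor $|[(a_{1,1},k_{1}),\ldots,(a_{r,1},k_{r})]|$ occurring in \eqref{right} (and likewise for every factor $|[(a_{1,2},j_{1}),\ldots,(a_{r,2},j_{r})]|$ occurring in \eqref{left}). After this substitution the multi-sum over $D=\prod_{i}[a_{i,2},a_{i,1}-1]$ in \eqref{right} separates into a product of $r$ independent single sums, the $i$-th of which matches the left-hand side of Lemma~\ref{secondidentity}; applying Lemma~\ref{secondidentity} termwise and then performing the factorial bookkeeping of Appendix~\ref{prooflr} collapses \eqref{right} to the right-hand side of \eqref{identity}. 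The same procedure applied to \eqref{left}, with Lemma~\ref{firstidentity} in place of Lemma~\ref{secondidentity}, gives the same value whenever $a_{r,2}\neq 0$; the case $a_{r,2}=0$ (where the naive substitution into \eqref{left} would involve negative $q$-binomials) is reduced to the previous one via $|\left\langle\ldots,[a_{r,1},0]\right\rangle|=|\left\langle\ldots,[a_{r-1,1},a_{r-1,2}]\right\rangle|\gauss{a_{r-1,2}}{a_{r,1}}$, exactly as in the closing paragraphs of the proof of Theorem~\ref{alpha}, and the case $r=1$ is immediate from $|[(0,0)]|=1$.

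Since all of these computations were already performed inside the proof of Theorem~\ref{alpha}, no new identity or estimate is needed, and the only genuinely new content is the remark that the common value of \eqref{right} and \eqref{left} is the cardinality $|\left\langle[a_{1,1},a_{1,2}],\ldots,[a_{r,1},a_{r,2}]\right\rangle|$ itself, not merely an auxiliary quantity. The mildest technical point---the closest thing to an obstacle---is checking that each intermediate tuple fed into \eqref{formula} meets the chain of inequalities required by Theorem~\ref{alpha}, so that the $q$-binomial manipulations never involve out-of-range parameters: for a tuple $(a_{1,1},k_{1}),\ldots,(a_{r,1},k_{r})$ with $(k_{1},\ldots,k_{r})\in D$ one has $a_{i,1}>k_{i}\ge a_{i,2}\ge a_{i+1,1}$ from the hypotheses and the definition of $D$ (and symmetrically for the tuples indexed by $C$ in \eqref{left}), so \eqref{formula} applies term by term. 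This is handled precisely as in the proof of Theorem~\ref{alpha}, and the corollary follows with essentially no additional work.
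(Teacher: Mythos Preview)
Your proposal is correct and matches the paper's approach: the corollary is stated without a separate proof because the equality $|\langle[a_{1,1},a_{1,2}],\ldots,[a_{r,1},a_{r,2}]\rangle|=\eqref{left}=\eqref{right}$ was already established in the proof of Lemma~\ref{recursion}, and the common value was computed to be the displayed expression in the course of proving Theorem~\ref{alpha} (equation~\eqref{identity} and the subsequent treatment of $a_{r,2}=0$ and $r=1$). Your observation that the only new content is recognizing $\eqref{left}=\eqref{right}$ as the cardinality itself, rather than an auxiliary quantity, is exactly the point.
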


\section{Special Case: (k,k-1)}
Note that when $r=1$ and $a_{1,1}=k,a_{1,2}=k-1$, with $k\leq N-1$, the formula \eqref{formula} gives
\begin{align*}
|(k,k-1)|=\gauss{N}{1},
\end{align*}
a number independent of $k$.

\begin{Prop}
There is a bijection between sets of the form $(k_{1},k_{1}-1)$ and $(k_{2},k_{2}-1)$ when $k_{1},k_{2}\leq N-1$.
\end{Prop}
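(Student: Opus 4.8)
The plan is to produce an explicit bijection by composing bijections between consecutive sets $(k,k-1)$ and $(k+1,k)$. (Theorem~\ref{alpha} already shows $|(k,k-1)|=\gauss{N}{1}$ for all $1\le k\le N-1$, so a bijection exists on cardinality grounds; the point of the proposition is to make one explicit.) For $1\le k\le N-2$ I would set
\[
\Phi_k\colon (k,k-1)\to(k+1,k),\quad W\mapsto W+\sigma W,
\qquad
\Psi_k\colon (k+1,k)\to(k,k-1),\quad V\mapsto V\cap\sigma^{-1}V,
\]
prove these are well-defined and mutually inverse, and then, given $1\le k_1\le k_2\le N-1$, take $\Phi_{k_2-1}\circ\cdots\circ\Phi_{k_1}$ (or the corresponding composite of $\Psi$'s when $k_1>k_2$); every index encountered lies in $[1,N-1]$, so nothing collapses.

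The single recurring tool is Proposition~\ref{preserve}, in the form: a subspace $U\subseteq\mb{F}_{q^N}$ with $1\le\dim U\le N-1$ is not $\sigma$-invariant, hence $\dim(U+\sigma U)\ge\dim U+1$ and $\dim(U\cap\sigma^{-1}U)\le\dim U-1$. From this I would record two facts. For $W\in(k,k-1)$: $\dim(W+\sigma W)=k+1$, and moreover $\dim(W+\sigma W+\sigma^2W)=k+2$ — it strictly contains $W+\sigma W$ (dimension $k+1\le N-1$, hence not $\sigma$-invariant), while equalling $(W+\sigma W)+\sigma(W+\sigma W)$, whose two summands meet in a subspace containing $\sigma W$, so by Grassmann the sum has dimension at most $2(k+1)-k=k+2$. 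For $V\in(k+1,k)$: $\dim(V\cap\sigma^{-1}V\cap\sigma^{-2}V)=k-1$ — it has dimension at most $k$, and if it were $k$ then $V\cap\sigma^{-1}V=\sigma^{-1}V\cap\sigma^{-2}V$, which forces $\sigma(V\cap\sigma^{-1}V)=V\cap\sigma^{-1}V$, making $V\cap\sigma^{-1}V$ a $\sigma$-invariant subspace of dimension $k$ with $1\le k\le N-1$, a contradiction. Well-definedness of $\Phi_k$ then follows by Grassmann: $\dim(W+\sigma W)=2k-\dim(W\cap\sigma W)=k+1$, and, using $W+\sigma W+\sigma^{-1}W=\sigma^{-1}(W+\sigma W+\sigma^2W)$, $\dim((W+\sigma W)\cap\sigma^{-1}(W+\sigma W))=2(k+1)-(k+2)=k$; and well-definedness of $\Psi_k$ is exactly the second fact above (applied after intersecting with one more shift).

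It remains to check the two composites. The equality $\Psi_k\circ\Phi_k=\mathrm{id}$ is immediate, since $W\subseteq(W+\sigma W)\cap\sigma^{-1}(W+\sigma W)$ and both have dimension $k$. For $\Phi_k\circ\Psi_k=\mathrm{id}$: $\Phi_k(\Psi_k(V))=(V\cap\sigma^{-1}V)+\sigma(V\cap\sigma^{-1}V)=(V\cap\sigma^{-1}V)+(V\cap\sigma V)\subseteq V$, and Grassmann gives its dimension as $2k-\dim(V\cap\sigma^{-1}V\cap\sigma V)$, which should equal $k+1=\dim V$. The main obstacle is precisely this last computation, $\dim(V\cap\sigma^{-1}V\cap\sigma V)=k-1$: it is proved like the $\sigma^{-2}$ fact above — if the dimension were $k$ then $V\cap\sigma^{-1}V\subseteq\sigma V$, so $\sigma^{-1}V\cap\sigma^{-2}V\subseteq V\cap\sigma^{-1}V$, forcing equality and making $V\cap\sigma^{-1}V$ a $\sigma$-invariant subspace of the forbidden dimension $k$. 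Everything else is routine bookkeeping with Grassmann's identity, the only care being to keep the running index in $[1,N-2]$ so that the relevant subspaces have dimension strictly between $0$ and $N$ and $\sigma$-invariance can be excluded.
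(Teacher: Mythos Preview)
Your argument is correct and is essentially the paper's own proof: both exhibit the bijection $W\mapsto W+\sigma W$ between $(k,k-1)$ and $(k+1,k)$, with inverse $V\mapsto V\cap\sigma^{-1}V$, and then compose along a chain. The paper checks well-definedness more tersely (the intersection $(W+\sigma W)\cap\sigma^{-1}(W+\sigma W)$ contains $W$ and is a proper subspace of $W+\sigma W$ by Proposition~\ref{preserve}), whereas you route through $\dim(W+\sigma W+\sigma^{2}W)$ via Grassmann; the only loose thread in your write-up is that for $\dim(V\cap\sigma^{-1}V\cap\sigma^{-2}V)=k-1$ you supply only the upper bound, but the missing lower bound is immediate since this is the intersection of two $k$-dimensional subspaces of the $(k+1)$-dimensional space $\sigma^{-1}V$.
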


\begin{proof}
It suffices to show that there exists a bijection between $(k,k-1)$ and $(k-1,k-2)$ for $2\leq k\leq N-1$. Define
\begin{align*}
\phi:(k-1,k-2)&\rightarrow (k,k-1)\\ W&\mapsto W+\sigma W.
\end{align*}

The map $\phi$ is well defined:
\begin{align*}
\dim(W+\sigma W)&=k,
\\ \dim((W+\sigma W)\cap(\sigma^{-1}W+W))&=k-1.
\end{align*}

\noindent
The second equality follows from the fact that $(W+\sigma W)\cap(\sigma^{-1}W+W)$ contains $W$ and has 
dimension \emph{strictly} less than $k$ by Proposition~\ref{preserve}.

Next, $\phi$ is injective:
if $W_{1},\ W_{2}\in(k-1,k-2)$ and $W_{1}+\sigma W_{1}=W_{2}+\sigma W_{2}=W'\in(k,k-1)$, then $W'\cap\sigma^{-1}W'=W_{1}=W_{2}$.

Finally, $\phi$ is surjective:
if $W'\in(k,k-1)$, then $W'\cap\sigma^{-1}W'\in (k-1,k-2)$ since $(W'\cap\sigma^{-1}W')+\sigma(W'\cap\sigma^{-1}W')\subseteq W'$; in fact $(W'\cap\sigma^{-1}W')+\sigma(W'\cap\sigma^{-1}W')=W'$.
\end{proof}

\section{A $q=1$ analogue}
We might ask what \eqref{formula} counts when $q=1$; indeed the situation translates from enumerating subspaces of vector spaces to enumerating subsets of sets.

Instead of subspaces of $\mb{F}_{{q}^{N}}$, we consider subsets of $\{1,\ldots,N\}$. Rather than multiplying by the element $\sigma$, we let $\sigma=(12\cdots N)$ cyclically permute the elements of $\{1,\ldots,N\}$ so that $\sigma$ preserves no proper subset, in analogy with Proposition \ref{preserve}; in fact, any permutation of $\{1,\ldots,N\}$ preserving no proper subset is cyclic. When $N=mn$, the number of $m$-element subsets $W$ of $\{1,\ldots,N\}$ such that $\bigcup_{i=0}^{n-1}{\sigma^{i}W}=\{1,\ldots,N\}$ is clearly $n$; this is true in a more general setting. We retain the $[\ ,\ ],(\ ,\ ),<\ ,\ >$ notation with the definitions restated in the context of subsets of $\{1,\ldots,N\}$ below.

\begin{Def}
Suppose $A_{1},A_{2},\ldots,A_{k}$ are sets of subsets of $\{1,\ldots,N\}$.
Let $[A_{1},A_{2},\ldots,A_{k}]$ be the set of all $k$-tuples $(W_{1},W_{2},\ldots,W_{k})$ such that
\begin{alignat*}{2}
W_{i}\in A_{i}&&\quad\text{for}&\quad 1\leq i\leq k,
\\W_{i}\supseteq W_{i+1}\cup\sigma W_{i+1}&&\quad\text{for}&\quad 1\leq i\leq k-1.
\end{alignat*} 
\end{Def}
If $A_{i}$ is the set of all subsets of $\{1,\ldots,N\}$ with cardinality $d_{i}$, then $A_{i}$ is denoted within the brackets as $d_{i}$.

\begin{Def}
For nonnegative integers $a$, $b$ with $N>a>b$ or $a=b=0$
\begin{align*}
(a,b):=\left\{W\subseteq \{1,\ldots,N\}: |W|=a,|W\cap \sigma^{-1} W|=b\right\}.
\end{align*}
\end{Def}

\begin{Def}
Given sets $[A_{1,1},A_{1,2}],[A_{2,1},A_{2,2}],\ldots,[A_{r,1},A_{r,2}]$ as defined above, let
\begin{align*}
\left\langle[A_{1,1},A_{1,2}],[A_{2,1},A_{2,2}],\ldots,[A_{r,1},A_{r,2}]\right\rangle
\end{align*}
denote the set of $2r$-tuples of subsets $(W_{1,1},W_{1,2},W_{2,1},W_{2,2},\ldots W_{r,1},W_{r,2})$ such that
\begin{alignat*}{2}
(W_{i,1},W_{i,2})\in[A_{i,1},A_{i,2}]&&\quad\text{for}&\quad 1\leq i\leq r,
\\W_{i,2}\supseteq W_{i+1,1}&&\quad\text{for}&\quad 1\leq i\leq r-1.
\end{alignat*}
\end{Def}

Lemma \ref{recursion} and our formulas in Theorem \ref{alpha} and Corollaries \ref{grssc} and \ref{RLequality} are still valid here by setting $q=1$; the $q$-binomials counting ways to extend subspaces become binomial terms counting ways to enlarge subsets. However, we can directly count some special cases. 

An appropriate adaptation of Proposition \ref{splittingsubspaces} gives
\begin{align*}
&[((n-1)m,(n-2)m),((n-2)m,(n-3)m),\ldots,(2m,m),(m,0)]
\\&=\left\{(\bigcup_{i=0}^{n-2}{\sigma^{i}W},\ \bigcup_{i=0}^{n-3}{\sigma^{i}W},\ \ldots,\ W\cup\sigma W,\ W)\ :\  |\bigcup_{i=0}^{n-1}{\sigma^{i}W}|=mn\ \text{and}\ |W|=m\right\}.
\end{align*}

Counting the number of ordered pairs $(W,k)$ where $W$ satisfies the conditions in the set above and $k$ is an element of $W$ in two different ways, one
by fixing $k$ first and the other by fixing $W$ first, yields
$|[((n-1)m,(n-2)m),\ldots,(m,0)]|=\frac{N}{m}\binom{N-mn+m-1}{m-1}$,
which is the same as the formula from Corollary \ref{grssc} when we set $q=1$.

We may also count the elements of $(m,k)$ directly
by counting the number of ordered pairs $(W,a)$ where $W\in (m,k)$
and $a\notin W$ in two ways, one by fixing $W$ first
and the other by fixing $a$ first. This yields
$\frac{N}{N-m}\binom{N-m}{m-k}\binom{m-1}{k}=\frac{N}{m}\binom{N-m-1}{m-k-1}\binom{m}{k}$. For $q$ a power of a prime, \eqref{formula} yields 
\begin{align*}
|(m,k)|=\frac{\gauss{N}{1}}{\gauss{m}{1}}\gauss{N-m-1}{m-k-1}\gauss{m}{k}q^{(m-k)(m-k-1)}.
\intertext{This gives the same expression when $q\rightarrow 1$ as obtained above.} 
\end{align*}

\appendix
\section{Proof of Theorem \ref{alpha}, \ref{left}=\ref{right}}
\label{prooflr}
\begin{proof}
\begin{align*}
\ref{left}&=\frac{\gauss{N}{1}}{\gauss{a_{1,2}}{1}}
\prod_{i=1}^r \frac{ [a_{i,2}]_q}{[a_{i,1}-a_{i+1,2}]_q}
\gauss{a_{i,2}-1}{a_{i+1,2}}
\gauss{a_{i-1,2}-a_{i,2}-1}{a_{i,1}-a_{i,2}-1}
\gauss{a_{i,1}-a_{i+1,2}}{a_{i,2}-a_{i+1,2}}
\prod_{i=1}^{r-1} {\gauss{a_{i,2}-1}{a_{i+1,2}-1}^{-1}}
\\&=\frac{\frac{[N]!_{q}}{[N-1]!_{q}}}{\frac{[a_{1,2}]!_{q}}{[a_{1,2}-1]!_{q}}}\frac{[a_{r,1}-1]!_{q}}{[a_{r,2}-1]!_{q}}\gauss{a_{r-1,2}-a_{r,2}-1}{a_{r,1}-a_{r,2}-1}\frac{1}{[a_{r,1}-a_{r,2}]!_{q}}
\\&\prod_{i=1}^{r-1}{\frac{[a_{i,2}]!_{q}}{[a_{i,2}-1]!_{q}}\frac{[a_{i,1}-a_{i+1,2}-1]!_{q}}{[a_{i+1,2}]!_{q}[a_{i,2}-a_{i+1,2}-1]!_{q}}\frac{[a_{i-1,2}-a_{i,2}-1]!_{q}}{[a_{i,1}-a_{i,2}-1]!_{q}[a_{i-1,2}-a_{i,1}]!_{q}}\frac{[a_{i+1,2}-1]!_{q}}{[a_{i,1}-a_{i,2}]!_{q}}}
\\&=\frac{\frac{[N]!_{q}}{[N-1]!_{q}}}{\frac{[a_{1,2}]!_{q}}{[a_{1,2}-1]!_{q}}}\frac{[a_{r,1}-1]!_{q}}{[a_{r,2}-1]!_{q}}\frac{[a_{r-1,2}-a_{r,2}-1]!_{q}}{[a_{r,1}-a_{r,2}-1]!_{q}[a_{r-1,2}-a_{r,1}]!_{q}}\frac{1}{[a_{r,1}-a_{r,2}]!_{q}}
\\&\frac{[a_{1,2}]!_{q}}{[a_{r,2}]!_{q}}\frac{[a_{r,2}-1]!_{q}}{[a_{1,2}-1]!_{q}}\frac{[N-a_{1,2}-1]!_{q}}{[a_{r-1,2}-a_{r,2}-1]!_{q}}\prod_{i=1}^{r-1}{\frac{[a_{i,1}-a_{i+1,2}-1]!_{q}}{[a_{i,1}-a_{i,2}-1]!_{q}[a_{i-1,2}-a_{i,1}]!_{q}[a_{i,1}-a_{i,2}]!_{q}}}
\\&=\frac{[N]_{q} [N-a_{1,2}-1]!_{q}}{[N-a_{1,1}]!_{q} [a_{r,2}]!_{q}}
\prod_{i=1}^r 
\frac{[a_{i,1}-a_{i+1,2}-1]!_{q}}{[a_{i,1}-a_{i,2}-1]!_{q} [a_{i,1}-a_{i,2}]!_{q}}
\prod_{i=2}^r
\frac{1}{[a_{i-1,2}-a_{i,1}]!_{q}}.
\end{align*}

$\nonumber$

\begin{align*}
\ref{right}&=\frac{\gauss{N}{1}}{\gauss{a_{1,1}}{1}}
\prod_{i=1}^r \frac{ [a_{i,1}]_q}{[a_{i-1,1}-a_{i,2}]_q}
\gauss{a_{i,1}-1}{a_{i+1,1}}
\gauss{a_{i,1}-a_{i+1,1}-1}{a_{i,2}-a_{i+1,1}}
\gauss{a_{i-1,1}-a_{i,2}}{a_{i,1}-a_{i,2}}
\prod_{i=1}^{r-1} \gauss{a_{i,1}-1}{a_{i+1,1}-1}^{-1}
\\&=\frac{\frac{[N]!_{q}}{[N-1]!_{q}}}{\frac{[a_{1,1}]!_{q}}{[a_{1,1}-1]!_{q}}}\frac{[a_{r,1}]!_{q}[a_{r-1,1}-a_{r,2}-1]!_{q}}
{[a_{r,2}]!_{q}[a_{r,1}-a_{r,2}-1]!_{q}[a_{r,1}-a_{r,2}]!_{q}[a_{r-1,1}-a_{r,1}]!_{q}}
\\&\prod_{i=1}^{r-1}{\frac{[a_{i,1}]!_{q}}{[a_{i-1,1}-a_{i,2}]!_{q}}\frac{[a_{i-1,1}-a_{i,2}-1]!_{q}}{[a_{i+1,1}]!_{q}}\frac{1}{[a_{i,2}-a_{i+1,1}]!_{q}[a_{i,1}-a_{i,2}-1]!_{q}}\frac{\frac{[a_{i-1,1}-a_{i,2}]!_{q}}{[a_{i,1}-a_{i,2}]!_{q}[a_{i-1,1}-a_{i,1}]!_{q}}}{\frac{[a_{i,1}-1]!_{q}}{[a_{i+1,1}-1]!_{q}[a_{i,1}-a_{i+1,1}]!_{q}}}}
\\&=\frac{\frac{[N]!_{q}}{[N-1]!_{q}}}{\frac{[a_{1,1}]!_{q}}{[a_{1,1}-1]!_{q}}}\frac{[a_{r,1}]!_{q}[a_{r-1,1}-a_{r,2}-1]!_{q}}
{[a_{r,2}]!_{q}[a_{r,1}-a_{r,2}-1]!_{q}[a_{r,1}-a_{r,2}]!_{q}[a_{r-1,1}-a_{r,1}]!_{q}}
\\&\frac{[a_{1,1}]!_{q}}{[a_{r,1}]!_{q}}\frac{[a_{r,1}-1]!_{q}}{[a_{1,1}-1]!_{q}}\frac{[a_{r-1,1}-a_{r,1}]!_{q}}{[N-a_{1,1}]!_{q}}\prod_{i=1}^{r-1}{\frac{[a_{i-1,1}-a_{i,2}-1]!_{q}}{[a_{i,2}-a_{i+1,1}]!_{q}[a_{i,1}-a_{i,2}-1]!_{q}[a_{i,1}-a_{i,2}]!_{q}}}
\\&=\frac{[N]_{q} [N-a_{1,2}-1]!_{q}}{[N-a_{1,1}]!_{q} [a_{r,2}]!_{q}}
\prod_{i=1}^r 
\frac{[a_{i,1}-a_{i+1,2}-1]!_{q}}{[a_{i,1}-a_{i,2}-1]!_{q} [a_{i,1}-a_{i,2}]!_{q}}
\prod_{i=2}^r
\frac{1}{[a_{i-1,2}-a_{i,1}]!_{q}}.
\end{align*}
\end{proof}

\section*{Acknowledgements}

This research was conducted at the 2012 summer REU (Research Experience
for Undergraduates) program at the University of Minnesota, Twin Cities, and was supported by NSF grants DMS-1001933 and DMS-1148634. We would like to
thank Gregg Musiker, Pavlo Pylyavskyy, Vic Reiner, and Dennis Stanton, who directed the program, for their support, and express particular gratitude to Dennis Stanton both for introducing us to this problem and for his indispensable
guidance throughout the research process. We would further like to thank Alex Miller for his assistance in editing this report as well as Sudhir Ghorpade and Samrith Ram for their helpful comments.

\end{document}